\newtheorem{theorem}{Theorem}[section]
\newtheorem{lemma}[theorem]{Lemma}
\newtheorem*{theorem*}{Theorem}
\newtheorem*{proposition*}{Proposition}
\newtheorem*{metathm}{Meta-Theorem}
\newtheorem{corollary}[theorem]{Corollary}
\theoremstyle{definition}
\newtheorem{definition}[theorem]{Definition}
\theoremstyle{remark}
\newtheorem{remark}[theorem]{Remark}
\numberwithin{equation}{section}
\newcommand{\norm}[1]{\left\Vert#1\right\Vert}
\newcommand{\abs}[1]{\left\vert#1\right\vert}
\begin{document}

\title{}



\title[Characterization of Boundary Representations]{A matrix characterization of boundary representations of positive matrices in the Hardy space}
\author{John E. Herr}
\address[John E. Herr]{Department of Mathematics, Butler University, Indianapolis, Indiana 46208}
\email{jeherr@butler.edu}
\author{Palle E. T. Jorgensen}
\address[Palle E.T. Jorgensen]{Department of Mathematics, University of Iowa, Iowa City, IA 52242}
\email{palle-jorgensen@uiowa.edu}
\author{Eric S. Weber}
\address[Eric S. Weber]{Department of Mathematics, Iowa State University, 396 Carver Hall, Ames, IA 50011}
\email{esweber@iastate.edu}
\subjclass[2010]{Primary: 46E22, 30B30; Secondary 42B05, 28A25}
\date{\today}
\begin{abstract}
Spectral measures give rise to a natural harmonic analysis on the unit disc via a boundary representation of a positive matrix arising from a spectrum of the measure.  We consider in this paper the reverse: for a positive matrix in the Hardy space of the unit disc we consider which measures, if any, yield a boundary representation of the positive matrix.  We introduce a potential characterization of those measures via a matrix identity and show that the characterization holds in several important special cases.
\end{abstract}
\maketitle

\section{Introduction}

\subsection{The Szeg\H{o} Kernel}  
The classical Hardy space $H^2(\mathbb{D})$ consists of those holomorphic functions $f$ defined on $\mathbb{D}$ satisfying
\begin{equation}\label{Hardy}\lVert f\rVert^2_{H^2}:=\sup_{0<r<1}\int_{0}^{1}\lvert f(re^{2\pi ix})\rvert^2\,dx<\infty.\end{equation}
It is well-known that an equivalent description of $H^2(\mathbb{D})$ is as the space of holomorphic functions on $\mathbb{D}$ with square-summable coefficients:
$$H^2(\mathbb{D})=\left\{\sum_{n=0}^{\infty}c_nz^n~\middle|~\sum_{n=0}^{\infty}\lvert c_n\rvert^2<\infty\right\},$$
where the norm is then equivalently given by
$$\lVert f\rVert^2_{H^2}=\sum_{n=0}^{\infty}\lvert c_n\rvert^2.$$
In addition, for each $f\in H^2(\mathbb{D})$, there exists a (unique) function $f^\ast\in L^2(\mathbb{T})$, which we shall call the Lebesgue boundary function of $f$, such that
\begin{equation}\label{LebBoundary}\lim_{r\rightarrow1^{-}}\int_{0}^{1}\lvert f(re^{2\pi ix})-f^\ast(e^{2\pi ix})\rvert^2\,dx=0.\end{equation}
In fact, $\lim_{r\rightarrow1^-}f(re^{2\pi i x})=f^\ast(e^{2\pi ix})$ pointwise for almost every $x$. If $f(z)=\sum_{n=0}^{\infty}a_n z^n$ and $g(z)=\sum_{n=0}^{\infty}b_n z^n$ are two members of $H^2(\mathbb{D})$, the inner product of $f$ and $g$ in $H^2(\mathbb{D})$ can be described in two ways:
\begin{equation*}
{\langle f,g\rangle}_{H^2}=\sum_{n=0}^{\infty}a_n\overline{b_n}=\int_{0}^{1}f^\ast(e^{2\pi ix})\overline{g^\ast(e^{2\pi ix})}\,dx.\end{equation*}
Because the point-evaluation functionals on the Hardy space are bounded, the Hardy space is a reproducing kernel Hilbert space. Its kernel is the classical Szeg\H{o} kernel $k(w,z)=:k_w$, defined by
$$k_w(z):=\frac{1}{1-\overline{w}z}.$$
We then have
\begin{equation*}f(z)=\langle f,k_z\rangle_{H^2}=\int_{0}^{1}f^\ast(e^{2\pi ix})\overline{k_z^\ast(e^{2\pi ix})}\,dx\end{equation*}
for all $f\in H^2(\mathbb{D})$. In particular,
\begin{equation}\label{szegorep}k(w,z):=\int_{0}^{1}k_w^\ast(e^{2\pi ix})\overline{k_z^\ast(e^{2\pi ix})}\,dx.\end{equation} 
Equation $\eqref{szegorep}$ shows that the Szeg\H{o} kernel reproduces itself with respect to what is, by some definition, its boundary. The measure on the circle used to define $k_z^\ast$ in \eqref{LebBoundary} is Lebesgue measure, as is the measure in $\eqref{szegorep}$. The intent of this paper is to show that among the functions in the Hardy space, there are a host of other kernels that reproduce with respect to their boundaries. However, these boundary functions will not be taken with respect to Lebesgue measure, but with respect to some other  measure on the circle $\mathbb{T}$, and the integration of these boundary functions will also be done with respect to this measure. Of interest are two main questions: which positive matrices does the Hardy space contain that reproduce themselves by boundary functions with respect to a given measure, and with respect to which measures will a positive matrix reproduce itself by boundary functions?  In \cite{HJW16a}, we focused on the first question whereas in the present paper we focus on the second.

\subsection{Boundary Representations}  The boundary behavior of functions in the Hardy space arises in a number of contexts, such as the spectral theory of the shift operator \cite{Clark72}, de Branges-Rovnyak spaces \cite{dBR66a,Sar94}, and ``pseudo-continuable'' functions \cite{Aleks89a,Pol93}

The boundary behavior of kernels in reproducing kernel Hilbert spaces has been considered by others, see for example \cite{AD84a,AD85a,ADLW06a}.  In those papers, much of the theory is developed under the a priori assumption that a reproducing kernel Hilbert space ``sits isometrically'' within an $L^2$ space, by which the authors mean that the elements of the Hilbert space possesses an $L^2$ boundary as we define below and the two norms coincide.  However, the authors also begin with a measure on the boundary and define a reproducing kernel by integrating the Cauchy kernel against the measure \cite[Lemma 6.4]{AD84a}.  In the previous paper \cite{HJW16a}, we extend this result to construct many kernels which reproduce themselves with respect to a fixed singular measure using the Kaczmarz algorithm.  Other boundary representations were introduced in \cite{DJ11} to understand the nature of spectral measures.  In \cite{DJ11}, and spectral measure gives rise to a positive matrix on $\mathbb{D}$ which reproduces itself on the boundary with respect to the spectral measure.  As demonstrated in \cite{HJW16a}, however, the measure need not be spectral.

\begin{remark}In this paper, we will be dealing with measures $\mu$ on the unit circle. The unit circle $\mathbb{T}:=\{z\in\mathbb{C}:\lvert z\rvert=1\}$ and its topology shall be identified with $[0,1)$ via the relation $\xi=e^{2\pi ix}$ for $\xi\in\mathbb{T}$ and $x\in[0,1)$. We will regard the measures $\mu$ as being supported on $[0,1)$. A function $f(\xi)$ defined on $\mathbb{T}$ (for example, a boundary function) may be regarded as being in $L^2(\mu)$ if $f(e^{2\pi ix})\in L^2(\mu)$. For aesthetics, the inner product (norm) in $L^2(\mu)$ will be denoted $\langle\cdot,\cdot\rangle_{\mu}$ ($\lVert\cdot\rVert_{\mu}$) rather than $\langle\cdot,\cdot\rangle_{L^2(\mu)}$ ($\lVert\cdot\rVert_{L^2(\mu)}$). The subscript will be suppressed where context suffices.  A measure $\mu$ will be called \emph{singular} if it is a Borel measure that is singular with respect to Lebesgue measure.
\end{remark}

\begin{definition} \label{D:boundary} If $\mu$ is a finite Borel measure on $[0,1)$ and $f(z)$ is an analytic function on $\mathbb{D}$, we say that $f^{\star}\in L^2(\mu)$ is an $L^2(\mu)$-boundary function of $f$ if
\begin{equation*}\lim_{r\rightarrow1^-}\left\lVert f^{\star} (x)-f(re^{2\pi ix})\right\rVert_{\mu}=0.\end{equation*}
If a function possesses an $L^2(\mu)$-boundary function, then clearly that boundary function is unique. The $L^2(\mu)$-boundary function of a function $f:\mathbb{D}\rightarrow\mathbb{C}$ shall be denoted $f^{\star}_\mu$, but we omit the subscript when context precludes ambiguity.
\end{definition}

\begin{definition}A positive matrix (in the sense of E.~H.~Moore) on a domain $E$ is a function $K(z,w):E\times E\rightarrow\mathbb{C}$ such that for all finite sequences $\zeta_1,\zeta_2,\ldots,\zeta_n\in E$, the matrix
$$(K(\zeta_j,\zeta_i))_{ij}$$
is positive semidefinite. 
\end{definition}

Our interest is in positive matrices on $E=\mathbb{D}$, and more specifically those residing in $H^2(\mathbb{D})$. Recall that the classical Hardy space is a reproducing kernel Hilbert space. We therefore desire to find subspaces of the Hardy space that not only are Hilbert spaces with respect to the $L^2(\mu)$-boundary norm, but are in fact reproducing kernel Hilbert spaces with respect to this norm. The classical Moore-Aronszajn Theorem connects positive matrices to reproducing kernel Hilbert spaces \cite{Aron50a}:

\begin{theorem*}[Moore-Aronszajn]To every positive matrix $K(w,z)$ on a domain $E$ there corresponds one and only one class of functions on $E$ with a uniquely determined quadratic form in it, forming a Hilbert space and admitting $K(w,z)$ as a reproducing kernel. This class of functions is generated by all functions of the form $\sum_{k=1}^{n}\xi_k K(w_{k},z)$, with norm given by
$$\left\lVert\sum_{k=1}^{n}\xi_k K(w_k,z)\right\rVert^2=\sum_{i,j=1}^{n}K(z_j,z_i)\overline{\xi_i}\xi_j.$$
\end{theorem*}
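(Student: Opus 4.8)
The plan is to build the space by hand from the sections of the kernel and then complete it, following the classical argument. Write $K_w := K(w, \cdot)$ for the section of the kernel with first slot fixed at $w$, and let $H_0$ denote the complex linear span of $\{K_w : w \in E\}$. On $H_0$ I would define a sesquilinear form by declaring $\langle K_w, K_z\rangle := K(w, z)$ and extending; concretely, for $f = \sum_i \xi_i K_{w_i}$ and $g = \sum_j \eta_j K_{v_j}$ this gives $\langle f, g\rangle = \sum_{i,j} \xi_i \overline{\eta_j} K(w_i, v_j)$, so that the induced quadratic form is exactly the one in the statement. Hermitian symmetry of this form is precisely the identity $K(w,z) = \overline{K(z,w)}$, which is automatic since positive semidefinite matrices are Hermitian, and positivity, $\langle f, f\rangle \ge 0$, is a direct restatement of the hypothesis that $(K(\zeta_j, \zeta_i))_{ij}$ is positive semidefinite.

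Next I would establish two facts that together promote this form to a genuine inner product enjoying the reproducing property. First, the reproducing identity on $H_0$: for $f = \sum_i \xi_i K_{w_i}$ one computes $\langle f, K_w\rangle = \sum_i \xi_i K(w_i, w) = f(w)$. This is the crux of the first delicate point, namely well-definedness: a priori the value $\langle f, g\rangle$ could depend on the chosen representations of $f$ and $g$ as finite combinations. The resolution is that $\langle f, K_w\rangle = f(w)$ depends only on $f$ as a function, while by conjugate symmetry $\langle K_w, g\rangle = \overline{g(w)}$ depends only on $g$ as a function; expanding $\langle f, g\rangle$ against one factor at a time then shows the form is independent of both representations. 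Second, being positive semidefinite the form obeys the Cauchy--Schwarz inequality, so $|f(w)|^2 = |\langle f, K_w\rangle|^2 \le \langle f, f\rangle \, K(w,w)$; hence $\langle f, f\rangle = 0$ forces $f \equiv 0$, and the form is positive definite.

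I would then complete $(H_0, \langle \cdot, \cdot\rangle)$ to an abstract Hilbert space $H$ and realize its elements as honest functions on $E$. The estimate $|f(w)| \le \|f\|\sqrt{K(w,w)}$ shows each evaluation functional is bounded on $H_0$, hence extends to $H$; equivalently, for $F \in H$ I set $F(w) := \langle F, K_w\rangle_H$, which agrees with the original functions on the dense subspace $H_0$. The second delicate point, and the one I expect to be the main obstacle, is to verify that the resulting map $H \to \mathbb{C}^E$ is injective, so that $H$ is genuinely a space of functions: if $F(w) = \langle F, K_w\rangle = 0$ for every $w$, then $F$ is orthogonal to the dense set $\{K_w\}$ and therefore $F = 0$. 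By construction $K_w \in H$ and $\langle F, K_w\rangle = F(w)$, so $K$ is the reproducing kernel of $H$, and the norm on $H_0$ is exactly the stated quadratic form.

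Finally, for uniqueness I would take any Hilbert space $\tilde{H}$ of functions on $E$ admitting $K$ as reproducing kernel and show it coincides with $H$. The reproducing property forces $\langle K_w, K_z\rangle_{\tilde H} = K(w,z)$, so $H_0$ embeds isometrically into $\tilde H$; moreover $H_0$ is dense in $\tilde H$, since any $g \in \tilde H$ orthogonal to every $K_w$ satisfies $g(w) = \langle g, K_w\rangle = 0$ for all $w$, whence $g = 0$. Because $\tilde H$ is complete and contains the dense isometric copy of $H_0$ that determines $H$, the two spaces agree as function spaces, both recovering $F$ through $F(w) = \langle F, K_w\rangle$, and the quadratic form is thereby uniquely determined. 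I expect the only genuinely subtle steps to be the two flagged above --- representation-independence of the form on $H_0$ and injectivity of the completion into $\mathbb{C}^E$ --- with everything else amounting to bookkeeping.
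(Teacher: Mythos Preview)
The paper does not supply a proof of this statement; the Moore--Aronszajn theorem is quoted as a classical result with a citation to \cite{Aron50a}, so there is no in-paper argument to compare against. Your proposal is the standard and correct proof: build the pre-Hilbert space on finite combinations of kernel sections, use the reproducing identity to show the form is well-defined and (via Cauchy--Schwarz) positive definite, complete, realize elements as functions through $F(w)=\langle F,K_w\rangle$, and deduce uniqueness from density of the sections. The two points you flag as delicate---representation-independence of the form and injectivity of the completion into $\mathbb{C}^E$---are indeed the only substantive steps, and your handling of each is correct.
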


Conversely, every reproducing kernel of a Hilbert space of functions on a common domain is a positive matrix. Let us then define two sets of interest:

\begin{definition}Let $\mu$ be a Borel measure on $[0,1)$. We define $\mathcal{K}(\mu)$ to be the set of positive matrices $K$ on $\mathbb{D}$ such that for each fixed $z\in\mathbb{D}$, $K(w,\cdot)$ possesses an $L^2(\mu)$-boundary $K^{\star}(w,\cdot)$, and $K(w,z)$ reproduces itself with respect to integration of these $L^2(\mu)$-boundaries, i.e.
\begin{equation} \label{Eq:reproduce}
K(w,z)=\int_{0}^{1}K^{\star}(w,x)\overline{K^{\star}(z,x)}\,d\mu(x)
\end{equation}
for all $z,w\in\mathbb{D}$.\end{definition}

\begin{definition}Let $K$ be a positive matrix on $\mathbb{D}$. We define $\mathcal{M}(K)$ to be the set of nonnegative Borel measures $\mu$ on $[0,1)$ such that for each fixed $z\in\mathbb{D}$, $K(w, \cdot)$ possesses an $L^2(\mu)$-boundary $K^{\star}(w, \cdot)$, and $K(w,z)$ reproduces itself with respect to integration of these $L^2(\mu)$-boundaries as in Equation (\ref{Eq:reproduce}).\end{definition}

\begin{definition}A sequence $\{x_n\}_{n=0}^{\infty}$ in a Hilbert space $\mathbb{H}$ is called a frame \cite{DS52} if there exist positive constants $A$ and $B$ such that
\begin{equation}\label{framecond}A\lVert\phi\rVert^2\leq\sum_{n=0}^{\infty}\lvert\langle \phi,x_n\rangle\rvert^2\leq B\lVert\phi\rVert^2\end{equation}
for all $\phi\in\mathbb{H}$. If $\{x_n\}_{n=0}^{\infty}$ satisfies (possibly only) the right-hand inequality in \eqref{framecond}, it is called a Bessel sequence. If $A=B$, the frame is called tight, and if $A=B=1$, it is called a Parseval frame.\end{definition}


The quaternary Cantor measure $\mu_4$ is the restriction of the $\frac{1}{2}$-dimensional Hausdorff measure to the quaternary Cantor set. Likewise, the ternary Cantor measure $\mu_3$ is the restriction of the $\frac{\ln(2)}{\ln(3)}$-dimensional Hausdorff measure to the ternary Cantor set. In \cite{JP98}, Jorgensen and Pedersen showed that the quaternary Cantor measure is spectral. That is, there exists a set $\Gamma \subset\mathbb{Z}$ such that the set of complex exponentials $\left\{e^{2\pi in x}\right\}_{\lambda \in \Gamma}$ is an orthonormal basis of $L^2(\mu_4)$. From this, Dutkay and Jorgensen \cite{DJ11} constructed a positive matrix $G_\Gamma$ inside $H^2$ that reproduces itself both in $H^2$ and with respect to $L^2(\mu_4)$-boundary integration. Thus $G_\Gamma\in\mathcal{K}(\mu_4)$.

In \cite{JP98}, it was also shown that $\mu_3$ is not spectral. Thus, it is not possible to construct a positive matrix for $\mu_3$ in the same way as for $\mu_{4}$. However, it is sufficient for $\mu_{3}$ to possess an exponential frame:
\begin{proposition*}
If there exists a sequence $\{n_{j}\}_{j=0}^{\infty}$ of nonnegative integers such that $\{ e^{2 \pi i n_{j} x} : j \geq 0\} \subset L^2(\mu)$ is a frame, then $\mathcal{K}(\mu)$ is nonempty.
\end{proposition*}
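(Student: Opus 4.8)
The plan is to construct a single positive matrix lying in $\mathcal{K}(\mu)$ directly from the frame, imitating the spectral construction of $G_\Gamma$ but correcting for the fact that $\{e^{2\pi i n_j x}\}$ need no longer be orthonormal. Write $e_j := e^{2\pi i n_j x}\in L^2(\mu)$, let $S$ be the (invertible, since $A,B>0$) frame operator of $\{e_j\}$, and let $g_j := S^{-1}e_j$ denote the canonical dual frame. The essential idea is to build the candidate boundary functions out of the \emph{dual} frame rather than the frame itself: for $w\in\mathbb{D}$ set
\[
b_w := \sum_{j=0}^{\infty}\overline{w}^{\,n_j}\,g_j \in L^2(\mu), \qquad K(w,z) := \langle b_w, b_z\rangle_\mu .
\]
The series converges because $(\overline{w}^{\,n_j})_j\in\ell^2$ (as $|w|<1$ forces $\sum_j |w|^{2n_j}\le (1-|w|^2)^{-1}$) and the dual frame $\{g_j\}$ is Bessel.

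I would then verify four things in order. First, $K$ is a positive matrix: for any $\zeta_1,\dots,\zeta_n\in\mathbb{D}$ the matrix $(K(\zeta_j,\zeta_i))_{ij}$ is the Gram matrix of $b_{\zeta_1},\dots,b_{\zeta_n}$ in $L^2(\mu)$, hence positive semidefinite. Second, expanding the inner product yields $K(w,z)=\sum_j \langle b_w,g_j\rangle_\mu\, z^{n_j}$; since $\{g_j\}$ is Bessel the coefficient sequence $(\langle b_w,g_j\rangle_\mu)_j$ lies in $\ell^2$, so $z\mapsto K(w,z)$ belongs to $H^2(\mathbb{D})$ (in fact to the closed span of $\{z^{n_j}\}$). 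Third, one must show that $b_w$ is the $L^2(\mu)$-boundary of $K(w,\cdot)$: writing $K(w,re^{2\pi i x})=\sum_j \langle b_w,g_j\rangle_\mu\, r^{n_j} e_j$, the boundedness of the synthesis operator of the frame $\{e_j\}$ (Bessel bound $B$) gives $\lVert b_w - K(w,re^{2\pi i\,\cdot})\rVert_\mu^2 \le B\sum_j |\langle b_w,g_j\rangle_\mu|^2 (1-r^{n_j})^2$, which tends to $0$ as $r\to 1^-$ by dominated convergence, and the limiting function $\sum_j\langle b_w,g_j\rangle_\mu\, e_j$ equals $b_w$ by the dual-frame reconstruction identity $f=\sum_j\langle f,g_j\rangle_\mu\, e_j$. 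Thus $K^{\star}(w,\cdot)=b_w$. Finally the reproducing identity is immediate: $\int_0^1 K^{\star}(w,x)\overline{K^{\star}(z,x)}\,d\mu(x)=\langle b_w,b_z\rangle_\mu=K(w,z)$, so $K\in\mathcal{K}(\mu)$ and $\mathcal{K}(\mu)\ne\varnothing$.

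The main obstacle is the third step, together with the conceptual point of why the dual frame is forced upon us. The naive analogue of the spectral construction—taking $K(w,z)=\sum_j\overline{w}^{\,n_j}z^{n_j}$ with boundary $b_w=\sum_j\overline{w}^{\,n_j}e_j$—fails because $\langle b_w,b_z\rangle_\mu=\sum_{j,k}\overline{w}^{\,n_j}z^{n_k}\langle e_j,e_k\rangle_\mu$ collapses to $K(w,z)$ only when $\{e_j\}$ is orthonormal, i.e. in the spectral case; the off-diagonal Gram entries $\langle e_j,e_k\rangle_\mu$ (equivalently, the frame operator $S$) obstruct reproduction. Substituting $g_j=S^{-1}e_j$ into the feature vector $b_w$ is exactly what makes the reconstruction formula $\sum_j\langle b_w,g_j\rangle_\mu\, e_j=b_w$ close the loop, so that the radial limit of $K(w,\cdot)$ is $b_w$ on the nose. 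The remaining work is purely analytic—justifying the interchange of summation and radial limit in $L^2(\mu)$—and is handled by the Bessel bound and dominated convergence as indicated above.
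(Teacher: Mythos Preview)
The paper does not actually prove this proposition; it is stated in the introduction and immediately attributed to the companion paper \cite{HJW16a} (``We proved a generalization of this result in \cite{HJW16a}''), whose title indicates a Kaczmarz-algorithm construction rather than a direct frame-theoretic one. So there is no in-paper proof to compare against.

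Your argument is correct and self-contained. Using the canonical dual $g_j=S^{-1}e_j$ to build the feature map $b_w=\sum_j\overline{w}^{\,n_j}g_j$ is exactly the right fix for the non-orthonormal case, and the four verifications go through as written: positivity is the Gram-matrix observation; $H^2$-membership follows since the coefficient sequence $(\langle b_w,g_j\rangle_\mu)_j$ is in $\ell^2$ by Bessel; the boundary identification uses the synthesis bound together with the reconstruction identity $f=\sum_j\langle f,g_j\rangle_\mu e_j$; and reproduction is then immediate from the definition $K(w,z)=\langle b_w,b_z\rangle_\mu$. One small point worth tightening: your estimate $\sum_j|w|^{2n_j}\le(1-|w|^2)^{-1}$ tacitly assumes the $n_j$ are distinct. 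If the statement allows repetitions, the upper Bessel bound still forces each integer to occur at most $B/\mu([0,1))$ times (take $\phi=e^{2\pi i n x}$ in the frame inequality), so $\sum_j|w|^{2n_j}<\infty$ anyway; a one-line remark to this effect would make the convergence of $b_w$ airtight. Relative to the Kaczmarz approach advertised in \cite{HJW16a}, your construction is more elementary and transparently identifies the resulting kernel as the ``dual Gramian'' $K(w,z)=\langle S^{-1}\vec{w},\vec{z}\rangle_{\ell^2}$-type object associated to the exponential frame.
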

We proved a generalization of this result in \cite{HJW16a}.  It is still unknown whether $\mu_3$ possesses an exponential frame, which motivates our interest in understanding the boundary representations of positive matrices.

\subsection{Kernels from a Coefficient Matrix}

Let $C = (c_{mn})$ be a bi-infinite matrix, where $m,n \geq 0$.  We consider the formal power series
\begin{equation} \label{Eq:KC}
K_{C}(w, z) = \sum_{n} \sum_{m} c_{mn} \overline{w}^{m} z^{n}.
\end{equation}
We shall assume that $c_{nm} = \overline{c}_{mn}$; we shall make additional assumptions on $C$ as needed.  For example, if we assume that $\{ c_{mn} \}$ is a bounded sequence, then the formal power series $K_{C}$ converges absolutely on $\mathbb{D} \times \mathbb{D}$, and thus $K_{C}$ is holomorphic on $\mathbb{D}$ in $z$ and antiholomorphic on $\mathbb{D}$ in $w$.  For the remainder of the paper, we shall assume the coefficient sequence is bounded.

Moreover, we wish $K_{C}$ to be a positive matrix on $\mathbb{D} \times \mathbb{D}$, so we assume that $C$ has this property.  When the matrix $C$ defines a bounded linear operator on $\ell^{2}(\mathbb{N}_{0})$, then $K_{C}$ is a positive matrix if and only if $C$ is a positive operator.  Indeed, for $z \in \mathbb{D}$, we denote by $\vec{z}$ the element of $\ell^2(\mathbb{N}_{0})$ where $( \vec{z} )_{n} = (z^{n})_{n}$.  Then, for $z_{1}, \dots , z_{N} \in \mathbb{D}$ and $\xi_{1}, \dots , \xi_{N} \in \mathbb{C}$,
\[ \sum_{j=1}^{N} \sum_{k=1}^{N} \xi_{j} \overline{\xi}_{k} K_{C}(z_{k}, z_{j}) = \sum_{j=1}^{N} \sum_{k=1}^{N} \xi_{j} \overline{\xi}_{k} \langle C \vec{z}_{j}, \vec{z}_k \rangle_{\ell^{2}} 
= \left\langle C \sum_{j=1}^{N} \xi_{j} \vec{z}_{j} , \sum_{k=1}^{N} \xi_{k} \vec{z}_{k} \right\rangle_{\ell^{2}} \]
which is nonnegative if and only if $C$ is a positive self-adjoint operator on $\ell^{2}(\mathbb{N}_{0})$.  Assuming that $C$ is a bounded linear operator on $\ell^{2}(\mathbb{N}_{0})$ has the additional virtue that for every $w \in \mathbb{D}$, $K_{C}(w, \cdot) \in H^2(\mathbb{D})$, since the coefficient sequence $(\overline{ C \vec{ w } } )_{m}$ is square-summable.  We have established the following:

\begin{lemma} \label{L:KCbounded}
If $C = (c_{mn})_{mn}$ is a bounded, positive, self-adjoint operator on $\ell^{2}(\mathbb{N}_{0})$, then the kernel $K_{C}$ as given in Equation (\ref{Eq:KC}) is a positive matrix such that for each $w \in \mathbb{D}$, $K_{C}(w, \cdot) \in H^{2}(\mathbb{D})$.  Moreover, for $w, z \in \mathbb{D}$,
\begin{equation} \label{Eq:Ceq}
K_{C}(w,z) = \langle C \vec{z}, \vec{w} \rangle_{\ell^2}.
\end{equation}
\end{lemma}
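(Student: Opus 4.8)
The plan is to verify the three assertions in the order that lets each build on the previous one: first the algebraic identity \eqref{Eq:Ceq}, then membership in $H^2(\mathbb{D})$, and finally positivity, which becomes essentially free once \eqref{Eq:Ceq} is available.

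First I would establish \eqref{Eq:Ceq} by a direct computation of the $\ell^2$ inner product. For $z \in \mathbb{D}$ the vector $\vec{z} = (z^n)_n$ lies in $\ell^2(\mathbb{N}_0)$ because $\sum_n |z|^{2n} = (1-|z|^2)^{-1} < \infty$, so $C\vec{z}$ is a well-defined element of $\ell^2(\mathbb{N}_0)$ with $(C\vec{z})_m = \sum_n c_{mn} z^n$. Expanding $\langle C\vec{z}, \vec{w}\rangle_{\ell^2} = \sum_m (C\vec{z})_m \overline{w}^m = \sum_m \sum_n c_{mn} \overline{w}^m z^n$ then matches the formal power series \eqref{Eq:KC} defining $K_C(w,z)$. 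The one point requiring care is the interchange of the two summations: since the coefficient sequence is bounded, say by $M$, the iterated sum is dominated by $M \sum_m |w|^m \sum_n |z|^n = M (1-|w|)^{-1}(1-|z|)^{-1} < \infty$, so the double series converges absolutely and the rearrangement is legitimate.

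Next, fixing $w \in \mathbb{D}$, I would read off the Taylor coefficients of $z \mapsto K_C(w,z)$. Grouping \eqref{Eq:KC} by powers of $z$, the coefficient of $z^n$ is $\sum_m c_{mn} \overline{w}^m$. Invoking the hypothesis $c_{nm} = \overline{c}_{mn}$, this equals $\sum_m \overline{c_{nm} w^m} = \overline{(C\vec{w})_n}$, so the coefficient sequence of $K_C(w,\cdot)$ is precisely the complex conjugate of $C\vec{w}$. Because $C$ is bounded and $\vec{w} \in \ell^2(\mathbb{N}_0)$, the sequence $C\vec{w}$ is square-summable, and hence so is its conjugate; by the coefficient characterization of the Hardy space recalled in the introduction, $K_C(w,\cdot) \in H^2(\mathbb{D})$, with $\lVert K_C(w,\cdot)\rVert_{H^2} = \lVert C\vec{w}\rVert_{\ell^2}$.

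Finally, positivity follows immediately from \eqref{Eq:Ceq}, exactly as in the computation preceding the statement: for $z_1,\dots,z_N \in \mathbb{D}$ and $\xi_1,\dots,\xi_N \in \mathbb{C}$, writing $v = \sum_{j=1}^N \xi_j \vec{z}_j \in \ell^2(\mathbb{N}_0)$ and applying \eqref{Eq:Ceq} gives $\sum_{j,k} \xi_j \overline{\xi}_k K_C(z_k, z_j) = \langle C v, v\rangle_{\ell^2} \geq 0$ since $C$ is positive. I expect no genuine obstacle in the argument; the only real subtlety is the convergence bookkeeping in the first step that legitimizes treating the formal series \eqref{Eq:KC} as an honest function and permits the interchange of sums. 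It is worth flagging that the self-adjointness (Hermitian symmetry $c_{nm}=\overline{c}_{mn}$) of $C$ is used precisely in the middle step, to identify the $H^2$ coefficients of $K_C(w,\cdot)$ cleanly with the conjugate of $C\vec{w}$.
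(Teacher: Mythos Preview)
Your proposal is correct and follows essentially the same approach as the paper: the discussion preceding the lemma in the paper (which serves as its proof) uses exactly the same ingredients---absolute convergence from the bounded coefficient hypothesis, the identification of the Taylor coefficients of $K_C(w,\cdot)$ with $(\overline{C\vec{w}})_n$ to get $H^2$-membership, and the computation $\sum_{j,k}\xi_j\overline{\xi}_k K_C(z_k,z_j)=\langle C v,v\rangle_{\ell^2}$ for positivity---just presented in a slightly different order.
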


For a given $C$ which defines a positive matrix as in Equation (\ref{Eq:KC}), we wish to determine which Borel measures on $\mathbb{T}$, if any, are in $\mathcal{M}(K_{C})$.  We shall approach the question via the following meta-theorem:

\begin{metathm}
A measure $\mu$ is in $\mathcal{M}(K_{C})$ if and only if the matrix equation $C = CMC$ is satisfied, where the matrix $M = ( \hat{\mu}(n-m) )_{mn}$.
\end{metathm}

We describe this as a meta-theorem for several reasons.  First, even if $C$ is a bounded operator on $\ell^{2}(\mathbb{N}_{0})$, the expression $CMC$ may not be well-defined.  Indeed, \emph{a priori} this product is only defined when $C$ and $M$ are bounded operators on $\ell^{2}(\mathbb{N}_{0})$; we may have only one or neither of these matrices with that property.  Second, the matrix equality does not \emph{a priori} assure that the kernel functions $K_{C}(w, \cdot)$ have $\mu$-boundaries.  Our goal in the present paper is to establish the meta-theorem for two special cases: i) for diagonal matrices $C$, and ii) for $C$ and $\mu$ for which $M$ which are bounded operators on $\ell^{2}(\mathbb{N}_{0})$.  We have a description of which $\mu$ has the property that $M$ is bounded \cite{Cas00a,DHSW11}, see also \cite{Lai12}:
\begin{lemma}
The matrix $M = ( \hat{\mu}(n-m) )_{mn}$ is a bounded operator on $\ell^2(\mathbb{N}_{0})$ if and only if $\mu << \lambda$ and the Radon-Nikodym derivative $\dfrac{d \mu}{d \lambda} \in L^{\infty}(\lambda)$.
\end{lemma}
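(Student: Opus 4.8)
The plan is to recognize $M$ as the (positive) quadratic form of a one-sided Toeplitz operator and to reduce its boundedness to a pointwise comparison of $\mu$ against Lebesgue measure $\lambda$. The starting point is an explicit evaluation of the form. For a finitely supported sequence $a = (a_n)_{n \geq 0}$, set $\phi(x) = \sum_{n \geq 0} a_n e^{-2\pi i n x}$, an analytic trigonometric polynomial. Writing $\hat{\mu}(n-m) = \int_0^1 e^{-2\pi i n x} e^{2\pi i m x}\, d\mu(x)$ and summing, one obtains
\[ \ip{Ma, a}_{\ell^2} = \int_0^1 \abs{\phi(x)}^2\, d\mu(x), \qquad \norm{a}_{\ell^2}^2 = \int_0^1 \abs{\phi(x)}^2\, d\lambda(x), \]
the second identity by Parseval. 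In particular the form is nonnegative, so $M$ extends to a bounded operator with norm $B$ if and only if $\int_0^1 \abs{\phi}^2\, d\mu \leq B \int_0^1 \abs{\phi}^2\, d\lambda$ for every analytic polynomial $\phi$.

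The sufficiency direction is then immediate. If $d\mu = h\, d\lambda$ with $h \in L^\infty(\lambda)$, then $\int \abs{\phi}^2\, d\mu = \int \abs{\phi}^2 h\, d\lambda \leq \norm{h}_\infty \int \abs{\phi}^2\, d\lambda$, so the form bound holds with $B = \norm{h}_\infty$ and $M$ is bounded.

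For necessity, assume $M$ is bounded with $\norm{M} = B$, so the form inequality holds for all analytic $\phi$. The crucial step is to upgrade this to \emph{all} trigonometric polynomials. Given an arbitrary trig polynomial $\psi$ with spectrum in $\{-N, \dots, N\}$, the function $e^{-2\pi i N x}\psi$ is an analytic polynomial and $\abs{e^{-2\pi i N x}\psi} = \abs{\psi}$ pointwise; applying the inequality to it, and noting that the unimodular twist leaves both $\int \abs{\psi}^2 d\mu$ and $\int \abs{\psi}^2 d\lambda$ unchanged, yields $\int \abs{\psi}^2\, d\mu \leq B \int \abs{\psi}^2\, d\lambda$ for every $\psi$. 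By the Fej\'er--Riesz theorem every nonnegative trig polynomial is of the form $\abs{\psi}^2$, and such polynomials are uniformly dense among nonnegative continuous functions on $\mathbb{T}$; passing to the limit gives $\int g\, d\mu \leq B \int g\, d\lambda$ for all continuous $g \geq 0$. Standard regularity of Borel measures (approximating indicators of open sets from below by such $g$) then promotes this to $\mu(E) \leq B\, \lambda(E)$ for every Borel set $E$, whence $\mu \ll \lambda$ and $d\mu/d\lambda \leq B$ a.e., so $d\mu/d\lambda \in L^\infty(\lambda)$. Combining the two directions in fact shows $\norm{M} = \norm{d\mu/d\lambda}_\infty$.

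I expect the main obstacle to be precisely the passage from analytic polynomials to all trigonometric polynomials: because $M$ is indexed by $\mathbb{N}_0$ rather than $\mathbb{Z}$, the quadratic form only directly controls $\int \abs{\phi}^2 d\mu$ for one-sided $\phi$, and one must exploit the unimodular shift-invariance of $\abs{\psi}^2$ to recover two-sided information before the Fej\'er--Riesz, density, and regularity arguments can finish the job. The positivity of $\mu$ is used both to make the form nonnegative (so that form boundedness equals operator boundedness) and in the final regularity step to deduce a genuine pointwise density bound.
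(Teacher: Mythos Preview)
The paper does not supply a proof of this lemma; it is quoted with references to the frame literature and then used as background. Your argument is correct and self-contained.

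The identification $\langle Ma,a\rangle_{\ell^2}=\int_0^1|\phi|^2\,d\mu$ for $\phi(x)=\sum_{n\ge 0}a_n e^{-2\pi i n x}$ is precisely the bridge between boundedness of the Toeplitz form and the Bessel condition for $\{e^{2\pi i n x}\}_{n\ge 0}$ in $L^2(\mu)$ that those references exploit. Your one-sided-to-two-sided upgrade via the shift $\psi\mapsto e^{-2\pi i N x}\psi$ is the standard device and is correctly executed: with your sign convention $\phi$ has Fourier spectrum in the nonpositive integers, and multiplying an arbitrary trigonometric polynomial $\psi$ with spectrum in $\{-N,\dots,N\}$ by $e^{-2\pi i N x}$ lands in that class while leaving $|\psi|^2$ unchanged. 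The passage through Fej\'er--Riesz and uniform density of nonnegative trigonometric polynomials (obtainable, for instance, as Fej\'er means of a nonnegative continuous $g$) to the measure inequality $\mu\le B\lambda$ is clean, and the sharp identification $\|M\|=\|d\mu/d\lambda\|_\infty$ drops out as a bonus.
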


\section{Lebesgue Measure: Kernels in $H^{2}(\mathbb{D})$ with Equal Norms}

We assume that the coefficient matrix $C$ defines a bounded, positive, self-adjoint operator on $\ell^{2}(\mathbb{N}_{0})$ and consider initially the special case of Lebesgue measure.

\begin{theorem} \label{Th:Lebesgue}
Suppose $C = (c_{mn})$ defines a bounded, positive, self-adjoint operator on $\ell^{2}(\mathbb{N}_{0})$.  The following are equivalent:
\begin{enumerate}
\item $\lambda \in \mathcal{M}(K_{C})$;
\item the coefficient matrix $C$ is a projection;
\item the norm induced by $K_{C}$ is equal to the Hardy space norm in the following sense: for all $\xi_{1}, \dots, \xi_{N} \in \mathbb{C}$ and $w_{1}, \dots , w_{N} \in \mathbb{D}$,
\[ \left\| \sum_{j=1}^{N} \xi_{j} K_{C}(w_{j}, \cdot) \right\|_{K_{C}} = \left\| \sum_{j=1}^{N} \xi_{j} K_{C}(w_{j}, \cdot) \right\|_{H^{2}}; \]
\item there exists a subspace $M$ of the Hardy space such that the Parseval frame $g_{n} = P_{M}z^{n}$ is such that $c_{mn} = \langle g_{n}, g_{m} \rangle$;
\item there exists a subspace $M$ of the Hardy space such that the projection of the Szeg\"o kernel onto $M$ is $K_{C}$.
\end{enumerate}
\end{theorem}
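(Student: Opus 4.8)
The plan is to treat condition (2)---that $C$ is an orthogonal projection on $\ell^2(\mathbb{N}_0)$---as the hub and to show each of (1), (3), (4), (5) equivalent to it. Throughout I would use the unitary $U : \ell^2(\mathbb{N}_0) \to H^2(\mathbb{D})$ determined by $U e_n = z^n$; since $\{z^n\}$ is an orthonormal basis of the Hardy space, $U$ is an isometric isomorphism, and the entries $c_{mn} = \langle C e_n, e_m\rangle$ are exactly the monomial-basis entries of the transported operator $U C U^{\ast}$. When $C = C^2 = C^{\ast}$, I would set $M := U(\operatorname{ran} C)$, so that $P_M = U C U^{\ast}$ is the orthogonal projection onto a closed subspace of the Hardy space. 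Every equivalence then amounts to recognizing a restatement of ``$C$ is the matrix of a projection.''

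First I would establish (1) $\Leftrightarrow$ (2), the analytic heart of the statement. For fixed $w$, Lemma \ref{L:KCbounded} together with $c_{nm} = \overline{c_{mn}}$ shows that $K_C(w,\cdot)$ is the Hardy space function with coefficient sequence $\overline{C\vec{w}}$; because $\mu=\lambda$ is Lebesgue measure, Definition \ref{D:boundary} reduces to the classical Fatou boundary \eqref{LebBoundary}, so $K_C(w,\cdot)$ automatically possesses the $L^2(\lambda)$-boundary $K_C^{\star}(w,x) = \sum_n \overline{(C\vec{w})_n}\,e^{2\pi i n x}$. Substituting into \eqref{Eq:reproduce} and using orthonormality of $\{e^{2\pi i n x}\}$ in $L^2(\lambda)$, I would obtain $\int_0^1 K_C^{\star}(w,x)\overline{K_C^{\star}(z,x)}\,d\lambda(x) = \langle C\vec{z},C\vec{w}\rangle_{\ell^2} = \langle C^2\vec{z},\vec{w}\rangle_{\ell^2}$, whereas $K_C(w,z) = \langle C\vec{z},\vec{w}\rangle_{\ell^2}$. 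Hence $\lambda \in \mathcal{M}(K_C)$ precisely when $\langle (C-C^2)\vec{z},\vec{w}\rangle = 0$ for all $z,w\in\mathbb{D}$. Since $\langle v,\vec{z}\rangle = \sum_n v_n \overline{z}^{\,n}$ vanishes for all $z$ only when $v=0$, the span of $\{\vec{z}\}$ is dense in $\ell^2(\mathbb{N}_0)$; as $C-C^2$ is self-adjoint, its quadratic form then vanishes identically, forcing $C = C^2$, i.e. $C$ is a projection.

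Next, (2) $\Leftrightarrow$ (3) follows from the same operator identity. Writing $v := \sum_j \overline{\xi_j}\,\vec{w_j}$, Moore-Aronszajn gives $\bigl\|\sum_j \xi_j K_C(w_j,\cdot)\bigr\|_{K_C}^2 = \langle C v, v\rangle$, while the coefficient description of $K_C(w_j,\cdot)$ gives $\bigl\|\sum_j \xi_j K_C(w_j,\cdot)\bigr\|_{H^2}^2 = \|C v\|^2 = \langle C^2 v, v\rangle$; equality for all data is again $\langle (C-C^2)v,v\rangle = 0$ on a dense set, hence $C = C^2$. For (2) $\Leftrightarrow$ (4) I would transport through $U$: if $C$ is a projection then $\{g_n := P_M z^n\}$ is the image under $P_M$ of the orthonormal basis $\{z^n\}$, hence a Parseval frame for $M$ (the standard fact that an orthogonal projection of an orthonormal basis is Parseval), and $\langle g_n, g_m\rangle = \langle P_M z^n, z^m\rangle = c_{mn}$; conversely the numbers $\langle P_M z^n, z^m\rangle$ are the monomial-basis entries of a projection, so $C = U^{\ast}P_M U$ is a projection. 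For (2) $\Leftrightarrow$ (5) I would use that the reproducing kernel of a closed subspace $M \subseteq H^2(\mathbb{D})$ is obtained by projecting the Szeg\H{o} kernel, $k^M_z = P_M k_z$, whose coefficient matrix in the sense of \eqref{Eq:KC} is exactly the monomial-basis matrix of $P_M$; thus this kernel equals $K_C$ iff $C$ is that projection matrix, i.e. iff (2) holds.

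The step I expect to be the main obstacle is the boundary identification inside (1) $\Leftrightarrow$ (2): one must be sure $K_C(w,\cdot)$ genuinely possesses an $L^2(\lambda)$-boundary and that it coincides with its Hardy space (Fatou) boundary, so that Parseval's identity may legitimately be applied to \eqref{Eq:reproduce}. This is precisely the boundary-existence caveat flagged in the meta-theorem; for Lebesgue measure it is discharged by the classical theory, but it is the point requiring genuine analysis rather than algebra. A secondary friction, more bookkeeping than obstacle, is keeping the conjugation conventions straight when matching $K_C(w,z) = \langle C\vec{z},\vec{w}\rangle$ against the subspace reproducing kernel in (5); here the Hermitian symmetry $K_C(w,z) = \overline{K_C(z,w)}$ and the relation $c_{nm} = \overline{c_{mn}}$ must be used with care. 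Finally, the density of $\{\vec{z} : z\in\mathbb{D}\}$ in $\ell^2(\mathbb{N}_0)$, which upgrades the pointwise identities to the operator equation $C = C^2$, should be recorded explicitly.
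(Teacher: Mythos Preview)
Your proposal is correct and follows essentially the same route as the paper: the core computations $\|\cdot\|_{K_C}^2=\langle Cv,v\rangle$ and $\|\cdot\|_{H^2}^2=\langle C^2 v,v\rangle$ (with $v=\sum_j\overline{\xi_j}\,\vec{w_j}$), together with the boundary identification via Fatou for Lebesgue measure, are exactly what the paper uses. The only difference is organizational: the paper proves $(1\Leftrightarrow 3)$, $(2\Leftrightarrow 3)$, $(3\Leftrightarrow 5)$, $(2\Leftrightarrow 4)$, passing through (3) and invoking polarization, whereas you take (2) as a hub and replace the polarization step by the density of $\{\vec{z}:z\in\mathbb{D}\}$ in $\ell^2(\mathbb{N}_0)$; both devices accomplish the same upgrade from ``equality on kernel combinations'' to the operator identity $C=C^2$.
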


\begin{remark}  The equivalence of 1 and 2 would follow immediately from our meta-theorem.  We will establish the meta-theorem for absolutely continuous measures with bounded Radon-Nikodym derivative in the next section--we present here a proof that uses only the equality of norms.
\end{remark}

\begin{proof}
$(1 \Leftrightarrow 3)$  If $\lambda \in \mathcal{M}(K_{C})$, then
\begin{align*}
\left\| \sum_{j=1}^{N} \xi_{j} K_{C}(w_{j}, \cdot) \right\|_{H^{2}}^{2} &= \int_{0}^{1}  \left(\sum_{j=1}^{N} \xi_{j} K_{C}^{\star}(w_{j}, \cdot) \right) \left( \overline{\sum_{k=1}^{N} \xi_{k} K_{C}^{\star}(w_{k}, \cdot) } \right) d \lambda \\
&= \sum_{j=1}^{N} \sum_{k=1}^{N} \xi_{j} \overline{\xi}_{k} \int_{0}^{1} K_{C}^{\star}(w_{j}, \cdot) \overline{ K_{C}^{\star}(w_{k}, \cdot) } d \lambda \\
&= \sum_{j=1}^{N} \sum_{k=1}^{N} \xi_{j} \overline{\xi}_{k} K_{C}(w_{j}, w_{k}) \\
&= \left\| \sum_{j=1}^{N} \xi_{j} K_{C}(w_{j}, \cdot) \right\|_{K_{C}}^{2}.
\end{align*}
Conversely, if the norms are equal, we have by the polarization identity
\begin{align*}
K_{C}(w,z) &= \langle K_{C}(w, \cdot), K_{C}(z, \cdot) \rangle_{K_{C}} \\
&= \langle K_{C}(w, \cdot), K_{C}(z, \cdot) \rangle_{H^{2}} \\
&= \int_{0}^{1} K_{C}^{\star}(w, \cdot) \overline{K_{C}^{\star}(z, \cdot)} d \lambda.
\end{align*}


$(2 \Leftrightarrow 3)$  Consider the following calculations:  
\begin{align}
\left\| \sum_{j=1}^{N} \xi_{j} K_{C}(w_{j}, \cdot) \right\|_{K_{C}}^{2} &= \sum_{j=1}^{N} \sum_{k=1}^{N} \xi_{j} \overline{\xi}_{k} K_{C}(w_{j}, w_{k}) \notag \\ 
&= \sum_{j=1}^{N} \sum_{k=1}^{N} \xi_{j} \overline{\xi}_{k} \left\langle C \vec{w}_{k}, \vec{w}_{j} \right\rangle_{\ell^2} \notag \\
&= \left\langle C \left( \sum_{k=1}^{N} \overline{\xi}_{k} \vec{w}_{k} \right), \sum_{j=1}^{N} \overline{\xi}_{j} \vec{w}_{j} \right\rangle_{\ell^2} \label{Eq:KCnorm}
\end{align}
and
\begin{align}
\left\| \sum_{j=1}^{N} \xi_{j} K_{C}(w_{j}, \cdot) \right\|_{H^{2}}^{2} &= \left\| \sum_{n=0}^{\infty} \left( \sum_{m=0}^{\infty} c_{mn} \sum_{j=1}^{N} \xi_{j} \overline{w}_{j}^{m} \right) z^{n} \right\|_{H^{2}}^{2} \notag \\
&= \left\| \sum_{n=0}^{\infty} \overline{ \left( \sum_{m=0}^{\infty} c_{nm} \sum_{j=1}^{N} \overline{\xi}_{j} w_{j}^{m} \right) } z^{n} \right\|_{H^{2}}^{2} \notag \\
&= \left\langle C \left( \sum_{j=1}^{N} \overline{\xi}_{j} \vec{w}_{j} \right), C \left( \sum_{k=1}^{N} \overline{\xi}_{k} \vec{w}_{k} \right) \right\rangle_{\ell^2}.  \label{Eq:H2norm}
\end{align}
It follows that if $C$ is projection, then the inner-products in Equations (\ref{Eq:KCnorm}) and (\ref{Eq:H2norm}) are equal.  Conversely, if the norms are equal, then by the polarization identity, we have that $C^2 = C$; since $C$ is assumed self-adjoint, $C$ is a projection.

$(3 \Leftrightarrow 5)$  If the norms are equal, then the RKHS generated by $K_{C}$ is a closed subspace $M$ of $H^2(\mathbb{D})$ (with equal norm), and hence the projection $P_{M}$ of the Szeg\"o kernel is the reproducing kernel for $M$, as is $K_{C}$.  Conversely, if $K_{C}$ is the projection of the Szeg\"o kernel onto $M$, then the norms are equal.

$(2 \Leftrightarrow 4)$  If $C$ is a projection, then we can define $\Phi : H^2(\mathbb{D}) \to H^2(\mathbb{D})$ by 
\[ \Phi f (z) = \sum_{m} (C \vec{f})_{m} z^{m}, \quad \text{where} \quad f(z) = \sum_{n} f_{n} z^{n} \quad \text{and} \quad \vec{f} = (f_{n})_{n}. \]
It is readily verified that $\Phi$ is a projection on $H^2(\mathbb{D})$.  We have
\[ c_{mn} = \langle \Phi z^{n}, z^{m} \rangle = \langle \Phi z^{n}, \Phi z^{m} \rangle. \]
Conversely, if $g_{n} = P_{M} z^{n}$, then $\{g_{n}\}$ is a Parseval frame, thus its Grammian matrix is a projection.
\end{proof}


\section{Diagonal Coefficient Matrices}

We consider the special case of when $C$ is a diagonal matrix.  Let $\Gamma \subset \mathbb{N}_{0}$ and consider $K_{\Gamma}(w,z) = \sum_{\gamma \in \Gamma} (z \overline{w})^{\gamma}$.  We will see that either a) there are many absolutely continuous measures in $\mathcal{M}(K_{\Gamma})$, or b) only Lebesgue measure is in $\mathcal{M}(K_{\Gamma})$.  The determining factor of which possibility occurs is the difference set of $\Gamma$.

\subsection{The Kernels $K_{4}$ and $K_{3}$}
Two specific kernels that fall into this category that we wish to understand are the kernels $K_{3}$ and $K_{4}$.  Recall that a spectrum for $\mu_{4}$ is
\[ \Gamma_{4} := \left\{ \sum_{j=0}^{N} l_{j} 4^{j} | l_{j} \in \{0,1\} \right\} = \{ 0 , 1, 4, 5, 16, 17, 20, 21, \dots \}. \]
Then
\[ K_{4} (w,z) := \sum_{ \gamma \in \Gamma_{4} } (\overline{w} z)^{\gamma} = \prod_{j=0}^{\infty} \left( 1 + (\overline{w}z  )^{4^j} \right). \]
An introduction to $K_{4}$ appears in \cite{DJ11}, where it is shown that $\mu_{4} \in \mathcal{M}(K_{4})$.  We show in Corollary \ref{C:K4} below that there are many (absolutely continuous) measures in $\mathcal{M}(K_{4})$.

We also consider the kernel $K_{3}$, defined analogously to $K_{4}$:
\[ K_{3}(w,z) := \prod_{j=0}^{\infty} \left( 1 + (\overline{w}z  )^{3^j} \right) = \sum_{n \in \Gamma_{3}} (\overline{w} z )^{n}, \]
where 
\[  \Gamma_{3} = \left\{ \sum_{j=0}^{N} l_{j} 3^{j} | l_{j} \in \{0, 1\} \right\} = \{0, 1, 3, 4, 9, 10, 12, 13, \dots \}. \]
We shall show in Corollary \ref{C:K3} below that $\mathcal{M}(K_{3})$ contains only Lebesgue measure.

Note that $K_{\Gamma}$ corresponds to the diagonal coefficient matrix $C$ with $c_{mm} = 1$ if and only if $m \in \Gamma$, and $c_{mm} = 0$ otherwise.  Therefore $C$ is a projection, and hence as a consequence of Theorem \ref{Th:Lebesgue}, we have:

\begin{corollary} \label{C:lambda}
For any $\Gamma \subset \mathbb{N}_{0}$, $\lambda \in \mathcal{M}(K_{\Gamma})$.
\end{corollary}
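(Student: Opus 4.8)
The plan is to recognize this as an immediate consequence of the equivalence $(1)\Leftrightarrow(2)$ in Theorem \ref{Th:Lebesgue}, so the only real work is to verify that the diagonal coefficient matrix attached to $K_{\Gamma}$ satisfies the standing hypotheses of that theorem and is in fact a projection.

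First I would identify the coefficient matrix explicitly. Writing $K_{\Gamma}(w,z)=\sum_{\gamma\in\Gamma}(\overline{w}z)^{\gamma}$ in the form of Equation \eqref{Eq:KC}, the coefficient matrix $C=(c_{mn})$ is diagonal with $c_{mm}=1$ when $m\in\Gamma$ and $c_{mm}=0$ otherwise (and $c_{mn}=0$ for $m\neq n$). Since the diagonal entries lie in $\{0,1\}$, the operator $C$ is bounded on $\ell^{2}(\mathbb{N}_{0})$ with $\norm{C}\leq 1$; it is self-adjoint because its matrix is real and diagonal; and it is positive since its diagonal entries are nonnegative. Thus $C$ meets the blanket hypothesis of Theorem \ref{Th:Lebesgue}, namely that $C$ defines a bounded, positive, self-adjoint operator on $\ell^{2}(\mathbb{N}_{0})$.

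Next I would observe that $C$ is a projection: being diagonal with idempotent entries ($0^{2}=0$ and $1^{2}=1$), it satisfies $C^{2}=C$, and together with self-adjointness this identifies $C$ as the orthogonal projection onto the closed span of the standard basis vectors indexed by $\Gamma$. Invoking the implication $(2)\Rightarrow(1)$ of Theorem \ref{Th:Lebesgue} then yields $\lambda\in\mathcal{M}(K_{\Gamma})$, which is the assertion.

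There is essentially no obstacle here; the mathematical content resides entirely in Theorem \ref{Th:Lebesgue}, and this corollary is merely the specialization to the diagonal case, where the projection property is visible by inspection. The only point requiring a moment's care is confirming boundedness together with the positivity and self-adjointness conditions, so that the theorem applies verbatim; all three follow at once from the $\{0,1\}$-valued diagonal structure of $C$.
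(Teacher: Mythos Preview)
Your proposal is correct and matches the paper's own argument essentially verbatim: the paper observes, immediately before stating the corollary, that the diagonal coefficient matrix for $K_{\Gamma}$ has $c_{mm}=1$ for $m\in\Gamma$ and $0$ otherwise, hence is a projection, and then invokes Theorem~\ref{Th:Lebesgue}. Your added verification that $C$ is bounded, positive, and self-adjoint is a reasonable bit of care but is not needed beyond what the paper leaves implicit.
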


We shall also consider diagonal matrices $C$ which are not projections--in fact we can consider diagonal matrices $C$ which are not bounded operators on $\ell^2(\mathbb{N}_{0})$.  For example, the Bergmann kernel is given by
\[ K_{B}(w,z) = \sum_{n=0}^{\infty} (n+1) (\overline{w} z)^{n}. \]
We shall show in Corollary \ref{C:empty} below that there are no representing measures for $C$ which have distinct nonzero diagonal entries.

\subsection{The Meta-Theorem for Diagonal Coefficient Matrices}

For two matrices $A = (a_{mn})$ and $B = (b_{mn})$, we say that $AB$ is \emph{defined in the matrix sense} if for every $m,n \in \mathbb{N}_{0}$, the sum $\sum_{k=0}^{\infty} = a_{mk} b_{kn}$ converges.  Note that this holds if $A,B$ are bounded operators on $\ell^{2}(\mathbb{N}_{0})$. We say $ABC$ is defined in the matrix sense if $AB$, $BC$, $(AB)C$, and $A(BC)$ are defined in the matrix sense and $(AB)C = A(BC)$.

\begin{theorem} \label{Th:diag}
Suppose $C$ is diagonal matrix such that $c_{nn} \geq 0$ and for every $0 < r < 1$, $\sum c_{nn} r^{n} < +\infty$. Let $\mu$ be a Borel probability measure on $[0,1)$ with $M=(\widehat{\mu}(n-m))_{mn}$. Then the following hold:
\begin{enumerate}
\item $\sum_{n=0}^{\infty}c_{nn}\overline{w}^n e^{2\pi i n x}$ converges in $L^2(\mu)$.
\item $CMC$ is defined in the matrix sense.
\item $K_{C}(w,z)$ reproduces itself with respect to $L^2(\mu)$ boundaries if and only if the equation $C=CMC$ holds.
\end{enumerate}
\end{theorem}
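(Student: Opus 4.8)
The plan is to handle the three items in order, using throughout that diagonality of $C$ collapses every matrix product appearing in the statement to a single nonzero term. Write $K_C(w,z)=\sum_{n}c_{nn}(\overline{w}z)^n$ and, for fixed $w\in\mathbb{D}$, let $g_w(x):=\sum_{n}c_{nn}\overline{w}^n e^{2\pi i n x}$ denote the candidate boundary function; I adopt the convention $\widehat{\mu}(k)=\int_0^1 e^{-2\pi i k x}\,d\mu(x)$, so that $\int_0^1 e^{2\pi i(n-m)x}\,d\mu=\widehat{\mu}(m-n)$. For item (1), each summand of $g_w$ has $L^2(\mu)$-norm exactly $c_{nn}\abs{w}^n$, since $\abs{e^{2\pi i n x}}\equiv 1$ and $\mu$ is a probability measure. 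Applying the hypothesis $\sum_n c_{nn}r^n<\infty$ with $r=\abs{w}<1$ shows the series of norms converges, so $g_w$ converges absolutely, hence converges, in $L^2(\mu)$. I would then record that $g_w$ is in fact the $L^2(\mu)$-boundary $K_C^{\star}(w,\cdot)$: the difference $K_C(w,re^{2\pi i\,\cdot})-g_w$ has $L^2(\mu)$-norm at most $\sum_n c_{nn}\abs{w}^n(1-r^n)$, and since $c_{nn}\abs{w}^n(1-r^n)\le c_{nn}\abs{w}^n$ is summable while each term tends to $0$ as $r\to 1^-$, dominated convergence for series drives this bound to $0$. This identification is what makes item (3) meaningful.

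Item (2) is a direct computation. Because $C$ is diagonal, each entry of $CM$, of $MC$, and of either bracketing of $CMC$ is a one-term sum, so all products are defined entrywise with no convergence question, and $(CM)C$ and $C(MC)$ both equal the matrix with $(m,n)$-entry $c_{mm}c_{nn}\widehat{\mu}(n-m)$. Thus $CMC$ is defined in the matrix sense, and I record the entry formula $(CMC)_{pq}=c_{pp}c_{qq}\widehat{\mu}(q-p)$ for use in item (3).

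The substance is item (3). I would expand the reproducing integral $\int_0^1 K_C^{\star}(w,\cdot)\overline{K_C^{\star}(z,\cdot)}\,d\mu$ by inserting the two boundary series. Both converge in $L^2(\mu)$, and the associated double series of norms factors as $\bigl(\sum_n c_{nn}\abs{w}^n\bigr)\bigl(\sum_m c_{mm}\abs{z}^m\bigr)<\infty$, using $\abs{\widehat{\mu}(k)}\le 1$, so continuity of the inner product together with absolute convergence permits term-by-term integration, yielding $\sum_{n,m}c_{nn}c_{mm}\widehat{\mu}(m-n)\overline{w}^n z^m$. Relabeling against the entry formula from item (2) identifies this with $\langle CMC\,\vec{z},\vec{w}\rangle_{\ell^2}$, while Lemma \ref{L:KCbounded} (whose formula holds here because $\sum_n c_{nn}\abs{wz}^n<\infty$) gives $K_C(w,z)=\langle C\vec{z},\vec{w}\rangle_{\ell^2}$. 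Hence reproduction is equivalent to $\langle(C-CMC)\vec{z},\vec{w}\rangle_{\ell^2}=0$ for all $z,w\in\mathbb{D}$. The reverse implication is immediate; for the forward one, $(w,z)\mapsto\langle(C-CMC)\vec{z},\vec{w}\rangle_{\ell^2}=\sum_{p,q}(C-CMC)_{pq}\,z^q\overline{w}^p$ is an absolutely convergent double power series in $(\overline{w},z)$ vanishing identically on $\mathbb{D}\times\mathbb{D}$, so freezing $w$ and invoking uniqueness of power-series coefficients in $z$, then in $\overline{w}$, forces every entry $(C-CMC)_{pq}$ to vanish, i.e. $C=CMC$.

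The only genuinely analytic points are the two limit/interchange arguments: that $g_w$ really is the $L^2(\mu)$-boundary, and the term-by-term integration in item (3). I expect the boundary identification to be the main obstacle to state cleanly, but both reduce to the single dominating series $\sum_n c_{nn}\abs{w}^n<\infty$ supplied by the hypothesis. Everything else is algebra, and it is routine precisely because diagonality removes all convergence questions from the matrix products.
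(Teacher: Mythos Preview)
Your argument is correct and follows essentially the same route as the paper's proof: absolute summability in $L^2(\mu)$ for item (1), the one-term collapse of the matrix products for item (2), and term-by-term integration via continuity of the inner product followed by uniqueness of power-series coefficients for item (3). The only cosmetic differences are that you spell out the boundary identification via a dominated-convergence-for-series argument where the paper simply invokes Abel summability, and you package the final equivalence as $\langle (C-CMC)\vec{z},\vec{w}\rangle_{\ell^2}=0$ rather than writing out the double sums; neither changes the substance.
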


\begin{proof}For the first part, we have for any $\abs{w}<1$,
\[ \sum_{n=0}^{\infty}\norm{c_{nn}\overline{w}^{n}e^{2\pi in x}}_{\mu} = \sum_{n=0}^{\infty}\abs{c_{nn}\overline{w}^{n}} <\infty. \]
Thus, $\sum_{n=0}^{\infty} c_{nn}\overline{w}^{n}e^{2\pi in x}$ is absolutely summable in $L^2(\mu)$ and thus converges in $L^2(\mu)$.

For the second part, observe that 
\begin{align*} 
(CM)_{mn}&=\sum_{k=0}^{\infty}c_{mk}\hat{\mu}(n - k) = c_{mm}\hat{\mu}(n - m) \\
(MC)_{mn}&=\sum_{k=0}^{\infty}\hat{\mu}(k - m)c_{kn} = \hat{\mu}(n-m) c_{nn}.
\end{align*}
Likewise,
\begin{align*}
(C(MC))_{mn}&=\sum_{k=0}^{\infty}c_{mk} \hat{\mu}(n - k) c_{nn}= c_{mm} c_{nn} \hat{\mu}(n - m) = c_{mm}M_{mn}c_{nn}\\
((CM)C)_{mn}&=\sum_{k=0}^{\infty}c_{mm} \hat{\mu}(k - m) c_{kn} = c_{mm} \hat{\mu}(n - m) c_{nn} = c_{mm}M_{mn}c_{nn}.
\end{align*}
This shows that $CM$, $MC$, $C(MC)$, and $(CM)C$ are defined in the matrix sense, and that $C(MC)=(CM)C$.

Now, suppose $K_{C}(w,z)$ reproduces itself with respect to $L^2(\mu)$ boundary. The first part, together with Abel summability, shows that $K_{C}^\star(w,x)=\sum_{m=0}^{\infty}c_{mm}\overline{w}^{n} e^{2\pi im x}$.

By continuity of the inner product in $L^2(\mu)$, we have
\begin{align*}\int_{0}^{1}K_{C}^\star(w,x) \overline{K_{C}^\star(z,x)}\,d\mu(x)&=\sum_{n = 0}^{\infty}\overline{c_{nn}} \left( \int_{0}^{1}K_{C}^\star(w,x) e^{-2\pi in x}\,d\mu(x) \right) z^{n} \\
&=\sum_{n = 0}^{\infty}\sum_{m=0}^{\infty}c_{nn}c_{mm} \left(\int_{0}^{1} e^{2 \pi i m x } e^{-2 \pi i n x} \,d\mu(x) \right) \overline{w}^{m}z^{n}\\
&=\sum_{n = 0}^{\infty}\sum_{m=0}^{\infty}c_{nn}c_{mm} \hat{\mu}(n-m) \overline{w}^{m}z^{n}\\
&=\sum_{n=0}^{\infty}\sum_{m=0}^{\infty} c_{mm} M_{mn} c_{nn} \overline{w}^{m}z^{n}\\
&=\sum_{n=0}^{\infty}\sum_{m=0}^{\infty} (CMC)_{mn} \overline{w}^{m}z^{n}
\end{align*}
Therefore, Equation (\ref{Eq:reproduce}) holds if and only if
\[ \sum_{n=0}^{\infty}\sum_{m=0}^{\infty} c_{mn} \overline{w}^{m}z^{n} = \sum_{n=0}^{\infty}\sum_{m=0}^{\infty} (CMC)_{mn} \overline{w}^{m}z^{n} \]
holds, which by uniqueness of Taylor coefficients, holds if and only if $C = CMC$.
\end{proof}

\begin{corollary} \label{C:empty}
Suppose $C$ is a diagonal matrix which satisfies the hypotheses of Theorem  \ref{Th:diag} and which has two distinct nonzero diagonal entries.  Then $\mathcal{M}(K_{C}) = \emptyset$.
\end{corollary}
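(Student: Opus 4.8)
The plan is to argue by contradiction. Suppose some $\mu\in\mathcal{M}(K_C)$ exists; I will show that the hypothesis $\mu\in\mathcal{M}(K_C)$ forces all nonzero diagonal entries of $C$ to coincide, contradicting the assumption that $C$ has two distinct nonzero diagonal entries. The engine is Theorem \ref{Th:diag}: since $C$ satisfies the hypotheses of that theorem, the reproducing property (membership of $\mu$ in $\mathcal{M}(K_C)$) is equivalent to the matrix equation $C=CMC$, where $M=(\hat{\mu}(n-m))_{mn}$.

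Granting this equivalence, the rest is a one-line comparison of diagonal entries. From the computation carried out in the proof of Theorem \ref{Th:diag}, one has $(CMC)_{mn}=c_{mm}\,\hat{\mu}(n-m)\,c_{nn}$, so on the diagonal $(CMC)_{nn}=c_{nn}^{2}\,\hat{\mu}(0)=c_{nn}^{2}\,\mu([0,1))$. Comparing with $C_{nn}=c_{nn}$, the equation $C=CMC$ forces $c_{nn}=c_{nn}^{2}\,\mu([0,1))$ for every $n$. Since $C$ has a nonzero diagonal entry, $K_C\not\equiv 0$, hence $\mu\neq 0$ and $\mu([0,1))>0$; thus for each index $n$ with $c_{nn}\neq 0$ we may divide by $c_{nn}$ to get $c_{nn}=1/\mu([0,1))$. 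Every nonzero diagonal entry therefore equals the single number $1/\mu([0,1))$, so no two of them can be distinct. This is the desired contradiction, giving $\mathcal{M}(K_C)=\emptyset$.

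The main obstacle is bridging the gap between the two statements: Theorem \ref{Th:diag} is phrased for Borel \emph{probability} measures, whereas $\mathcal{M}(K_C)$ is defined for arbitrary nonnegative Borel measures, so before invoking $C=CMC$ I must verify that any $\mu\in\mathcal{M}(K_C)$ is in fact finite (after which $\mu([0,1))$ is a genuine positive constant and the division above is legitimate). I would establish finiteness as follows. The boundary function $K_C^{\star}(w,\cdot)$ is the $L^2(\mu)$-limit of the continuous functions $K_C(w,re^{2\pi i\,\cdot})$, which converge uniformly to $\sum_{n}c_{nn}\overline{w}^{n}e^{2\pi i n x}$; hence $K_C^{\star}(w,\cdot)$ equals this sum $\mu$-almost everywhere and lies in $L^2(\mu)$. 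Fixing $j$ with $c_{jj}\neq 0$ and averaging over $w=\rho e^{2\pi i l/N}$ for $0\le l<N$ (with $\rho<1$) isolates a single residue class: the finite combinations $h_N:=\sum_{n\equiv j\ (\mathrm{mod}\ N)}c_{nn}\rho^{n}e^{2\pi i n x}$ lie in $L^2(\mu)$, and because $c_{nn}\ge 0$ with $\sum c_{nn}\rho^{n}<\infty$ the tails vanish, so $|h_N|\to c_{jj}\rho^{j}$ uniformly. Consequently $\int|h_N|^2\,d\mu$ can stay finite only if $\mu([0,1))<\infty$. Once finiteness is secured, the computation of Theorem \ref{Th:diag} applies verbatim for a finite nonnegative measure (the only use of ``probability'' being that $\|e^{2\pi i n x}\|_{\mu}$ is finite, which remains true with value $\sqrt{\mu([0,1))}$), so the equivalence $\mu\in\mathcal{M}(K_C)\iff C=CMC$ holds with $\hat{\mu}(0)=\mu([0,1))$, and the diagonal argument closes the proof. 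I expect this finiteness verification to be the only real work; the diagonal comparison itself is immediate.
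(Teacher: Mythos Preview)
Your proof is correct and follows essentially the same approach as the paper: invoke Theorem~\ref{Th:diag} to obtain $C=CMC$, then compare diagonal entries to conclude that every nonzero $c_{nn}$ equals the common value $1/\mu([0,1))$. Your additional care in establishing finiteness of $\mu$ (to bridge the gap between the probability-measure hypothesis of Theorem~\ref{Th:diag} and the general nonnegative measures allowed in $\mathcal{M}(K_C)$) is a point the paper glosses over---note, however, that Definition~\ref{D:boundary} already presupposes $\mu$ finite, so this gap is arguably closed by convention.
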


\begin{proof}
Suppose $\mu \in \mathcal{M}(K_{C})$.  Then we must have $c_{mm} = (C M C)_{mm} = c_{mm} M_{mm} c_{mm} = \| \mu \| c_{mm}^2$ for all $m \in \mathbb{N}_{0}$.  Thus, for any nonzero diagonal entry $c_{mm} = \| \mu \|$.
\end{proof}

\begin{definition}
For a set $A \subset \mathbb{R}$, the difference set is 
\[ \mathcal{D}(A) = \{ x - y \ | \ x,y \in A \}. \]
\end{definition}

\begin{corollary} \label{C:FC}
If $\Gamma \subset \mathbb{N}_{0}$ and $\mu$ is a probability measure, $\mu \in \mathcal{M}(K_{\Gamma})$ if and only if
\begin{equation} \label{Eq:Fourier}
\hat{\mu} (n) = 0
\end{equation}
for all $n \in \mathcal{D}(\Gamma) \setminus \{0\}$.
\end{corollary}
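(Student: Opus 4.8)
The plan is to reduce Corollary \ref{C:FC} directly to the diagonal case of the meta-theorem already proved in Theorem \ref{Th:diag}. Since $K_\Gamma$ corresponds to the diagonal matrix $C$ with $c_{mm}=1$ for $m\in\Gamma$ and $c_{mm}=0$ otherwise, this $C$ satisfies the hypotheses of Theorem \ref{Th:diag}: it is diagonal, the diagonal entries are nonnegative, and $\sum c_{mm}r^m \leq \sum_{n\geq 0} r^n < \infty$ for $0<r<1$. Moreover, the entries are projections of $0$ and $1$. Hence Theorem \ref{Th:diag}(3) applies, and $\mu\in\mathcal{M}(K_\Gamma)$ holds if and only if the matrix identity $C=CMC$ is satisfied. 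So the entire content of the corollary becomes the translation of the equation $C=CMC$ into a condition on $\widehat\mu$ restricted to the difference set of $\Gamma$.

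The key computation is to evaluate the entries of $CMC$ for this particular diagonal $C$. From the formulas established inside the proof of Theorem \ref{Th:diag}, namely $(CMC)_{mn}=c_{mm}M_{mn}c_{nn}=c_{mm}\,\widehat\mu(n-m)\,c_{nn}$, I would observe that for our $0/1$-valued entries this collapses to $(CMC)_{mn}=\widehat\mu(n-m)$ when both $m,n\in\Gamma$ and $(CMC)_{mn}=0$ otherwise. The left-hand matrix $C$ has entries $C_{mn}=\delta_{mn}c_{mm}$, i.e. $C_{mn}=1$ when $m=n\in\Gamma$ and $0$ otherwise. Equating the two matrices entry by entry then gives the following: for $m=n\in\Gamma$ we need $\widehat\mu(0)=1$, which holds automatically since $\mu$ is a probability measure; and for $m,n\in\Gamma$ with $m\neq n$ we need $\widehat\mu(n-m)=0$. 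Every pair $m\neq n$ in $\Gamma$ produces a nonzero element $n-m$ of $\mathcal{D}(\Gamma)$, and conversely every nonzero element of $\mathcal{D}(\Gamma)$ arises this way, so the off-diagonal conditions are exactly the requirement that $\widehat\mu(k)=0$ for all $k\in\mathcal{D}(\Gamma)\setminus\{0\}$.

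I would therefore structure the proof in two short steps. First, invoke Theorem \ref{Th:diag} to replace membership in $\mathcal{M}(K_\Gamma)$ by the identity $C=CMC$. Second, carry out the entrywise comparison above to see that $C=CMC$ is equivalent to \eqref{Eq:Fourier}. The equivalence is genuinely an if-and-only-if at each stage: the diagonal entries of $C=CMC$ are automatic from $\|\mu\|=1$, while the off-diagonal entries match precisely the Fourier vanishing condition. One should also check the entries $(CMC)_{mn}$ where at least one index lies outside $\Gamma$; there $c_{mm}$ or $c_{nn}$ is zero, so the entry vanishes, and $C_{mn}$ is likewise zero (it is off-diagonal, or on the diagonal with $m\notin\Gamma$), so these entries impose no condition.

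I do not expect a serious obstacle here, since the heavy lifting—establishing that $CMC$ is defined in the matrix sense and that reproduction is equivalent to $C=CMC$—is already done in Theorem \ref{Th:diag}, and the present $C$ manifestly satisfies its hypotheses. The only point requiring mild care is the bookkeeping of the difference set: one must confirm that the map $(m,n)\mapsto n-m$ from ordered pairs of distinct elements of $\Gamma$ surjects onto $\mathcal{D}(\Gamma)\setminus\{0\}$, so that no nonzero difference is missed and no spurious condition is introduced. Since $\mathcal{D}(\Gamma)$ is defined precisely as the set of such differences, this is immediate, and the corollary follows.
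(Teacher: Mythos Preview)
Your proposal is correct and follows essentially the same route as the paper: invoke Theorem~\ref{Th:diag} to reduce $\mu\in\mathcal{M}(K_\Gamma)$ to the matrix identity $C=CMC$, then use the formula $(CMC)_{mn}=c_{mm}\,\widehat\mu(n-m)\,c_{nn}$ established there to read off the entrywise conditions. The paper's proof is terser but the argument is the same, including the observation that the diagonal entries impose no constraint beyond $\widehat\mu(0)=1$ and the off-diagonal entries give exactly the vanishing on $\mathcal{D}(\Gamma)\setminus\{0\}$.
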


\begin{proof}
We verify $C=CMC$ holds if and only if Equation (\ref{Eq:Fourier}) holds.  We have $c_{mn} = 1$ if $m=n \in \Gamma$ and $0$ otherwise.  Thus by the calculation in Theorem \ref{Th:diag},
\[ (C M C)_{mn} = M_{mn} \]
if $m,n \in \Gamma$ and
\[ (C M C)_{mn} = 0 \]
otherwise.  Thus, $C=CMC$ holds if and only if $M_{mn} = 0$ whenever $m, n \in \Gamma$ with $m \neq n$.  The result now follows since
\[ M_{mn} = \hat{ \mu }(n-m). \]
\end{proof}

\subsection{The Kernels $K_{3}$ and $K_{4}$, continued}

\begin{corollary} \label{C:K3}
Suppose $\Gamma \subset \mathbb{N}_{0}$ is such that $\mathcal{D}(\Gamma) = \mathbb{Z}$.  Then $\mathcal{M}(K_{\Gamma}) = \{ \lambda\}$.  In particular, $\mathcal{M}(K_{3}) = \{ \lambda \}$.
\end{corollary}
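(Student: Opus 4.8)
The plan is to combine Corollary~\ref{C:FC} with the uniqueness theorem for Fourier--Stieltjes coefficients, after first pinning down the total mass of an arbitrary representing measure. Note that Corollary~\ref{C:FC} is stated only for probability measures, whereas $\mathcal{M}(K_\Gamma)$ consists of all nonnegative Borel measures, so the first thing I would do is argue that every $\mu \in \mathcal{M}(K_\Gamma)$ is automatically a probability measure. Since $\mathcal{D}(\Gamma) = \mathbb{Z}$ forces $\Gamma$ to be infinite, in particular nonempty, I pick any $m \in \Gamma$. The diagonal coefficient matrix $C$ of $K_\Gamma$ has $c_{mm} = 1$, and by the entrywise computation in the proof of Theorem~\ref{Th:diag}, $(CMC)_{mm} = c_{mm} M_{mm} c_{mm} = \widehat{\mu}(0) = \lVert\mu\rVert$. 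Because $\mu \in \mathcal{M}(K_\Gamma)$ is equivalent to $C = CMC$ by Theorem~\ref{Th:diag}, comparing the $(m,m)$ entries yields $1 = \lVert\mu\rVert$, so $\mu$ is a probability measure and Corollary~\ref{C:FC} applies.

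With $\mu$ now known to be a probability measure, Corollary~\ref{C:FC} gives that $\mu \in \mathcal{M}(K_\Gamma)$ if and only if $\widehat{\mu}(n) = 0$ for every $n \in \mathcal{D}(\Gamma)\setminus\{0\}$. Under the hypothesis $\mathcal{D}(\Gamma) = \mathbb{Z}$ this becomes $\widehat{\mu}(n) = 0$ for all $n \neq 0$, while $\widehat{\mu}(0) = \lVert\mu\rVert = 1$. These are precisely the Fourier--Stieltjes coefficients of Lebesgue measure $\lambda$, so $\widehat{\mu - \lambda}(n) = 0$ for all $n \in \mathbb{Z}$; by the Stone--Weierstrass density of the exponentials in $C(\mathbb{T})$ together with the uniqueness theorem for finite Borel measures, I conclude $\mu = \lambda$. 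Conversely $\lambda \in \mathcal{M}(K_\Gamma)$ by Corollary~\ref{C:lambda}, and hence $\mathcal{M}(K_\Gamma) = \{\lambda\}$.

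To deduce the statement for $K_3$ I would verify $\mathcal{D}(\Gamma_3) = \mathbb{Z}$ using balanced ternary. Given $n \in \mathbb{Z}$, I write $n = \sum_j d_j 3^j$ with digits $d_j \in \{-1,0,1\}$; setting $a = \sum_{j : d_j = 1} 3^j$ and $b = \sum_{j : d_j = -1} 3^j$, both $a,b \in \Gamma_3$ and $a - b = n$, so $\mathbb{Z} \subseteq \mathcal{D}(\Gamma_3)$, while the reverse inclusion is immediate. Thus the general case applies with $\Gamma = \Gamma_3$, giving $\mathcal{M}(K_3) = \{\lambda\}$.

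I expect the main obstacle to be the first step---excluding non-probability measures---since that is exactly the gap between the hypotheses of Corollary~\ref{C:FC} and the definition of $\mathcal{M}(K_\Gamma)$; it is resolved cleanly by the diagonal identity $(CMC)_{mm} = \lVert\mu\rVert\,c_{mm}^2$ used in Corollary~\ref{C:empty}. The remaining ingredients (Fourier uniqueness and the balanced-ternary identity for $\Gamma_3$) are routine.
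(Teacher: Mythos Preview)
Your argument is correct and follows essentially the same route as the paper: use $C=CMC$ (equivalently Corollary~\ref{C:FC}) to force $\hat{\mu}(n)=0$ for all nonzero $n$, invoke Fourier uniqueness to conclude $\mu=\lambda$, and then verify $\mathcal{D}(\Gamma_3)=\mathbb{Z}$. You are actually more careful than the paper on one point: you explicitly pin down $\hat{\mu}(0)=1$ from the diagonal of $C=CMC$ before invoking uniqueness, whereas the paper's proof simply asserts that $\hat{\mu}(k)=0$ for $k\neq 0$ forces $\mu=\lambda$ without isolating the mass.

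The one genuinely different ingredient is your proof that $\mathcal{D}(\Gamma_3)=\mathbb{Z}$. The paper (Lemma~\ref{L:Gamma3}) argues by showing $\mathcal{D}(\Gamma_3)$ is invariant under the three affine maps $x\mapsto 3x$, $3x+1$, $3x-1$ and contains $\{-1,0,1\}$, which is an inductive argument in disguise. Your balanced-ternary construction is a direct explicit decomposition: write $n=\sum d_j 3^j$ with $d_j\in\{-1,0,1\}$ and split positive and negative digits. Both are short; yours has the advantage of producing the witnessing pair $(a,b)\in\Gamma_3\times\Gamma_3$ in one line, while the paper's IFS formulation highlights the self-similar structure of $\Gamma_3$.
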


\begin{proof}
By Corollary \ref{C:lambda}, we have $\lambda \in \mathcal{M}(K_{\Gamma})$.  Now, suppose $\mu \in \mathcal{M}(K_{\Gamma})$.  We must have that the matrix equation $C = CMC$ is satisfied.  Thus, for $m \neq n \in \Gamma$, we have $0 = c_{mn} = c_{mm} M_{mn} c_{nn} = \hat{\mu}(n - m)$.  Since the difference set of $\Gamma$ is $\mathbb{Z}$, it follows that $\hat{\mu}(k) = 0$ for $k \in \mathbb{Z} \setminus \{0\}$, whence $\mu$ must be Lebesgue measure.  The claim for $K_{3}$ is a consequence of Lemma \ref{L:Gamma3}.
\end{proof}

\begin{lemma} \label{L:Gamma3}
The difference set $\mathcal{D}(\Gamma_{3}) = \mathbb{Z}$.
\end{lemma}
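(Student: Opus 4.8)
The plan is to reduce the claim to the existence of \emph{balanced ternary} representations. First I observe that $\mathcal{D}(\Gamma_{3}) \subseteq \mathbb{Z}$ is immediate and that the difference set is symmetric about $0$: if $k = x - y \in \mathcal{D}(\Gamma_{3})$ then $-k = y - x \in \mathcal{D}(\Gamma_{3})$, and $0 = 0 - 0 \in \mathcal{D}(\Gamma_{3})$ since $0 \in \Gamma_{3}$. Hence it suffices to show that every positive integer $k$ can be written as $k = x - y$ with $x, y \in \Gamma_{3}$.

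The key reformulation is the description of $\Gamma_{3}$ as precisely the set of nonnegative integers whose base-$3$ expansion uses only the digits $0$ and $1$. If $x = \sum_{j} a_{j} 3^{j}$ and $y = \sum_{j} b_{j} 3^{j}$ with $a_{j}, b_{j} \in \{0,1\}$, then $x - y = \sum_{j} (a_{j} - b_{j}) 3^{j}$, where each coefficient $a_{j} - b_{j}$ lies in $\{-1, 0, 1\}$. Conversely, given any expansion $k = \sum_{j} d_{j} 3^{j}$ with $d_{j} \in \{-1,0,1\}$, I recover a valid difference by setting $x = \sum_{j : d_{j} = 1} 3^{j}$ and $y = \sum_{j : d_{j} = -1} 3^{j}$; both lie in $\Gamma_{3}$ (every digit is $0$ or $1$), and $x - y = k$. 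Thus the problem is exactly equivalent to showing that every integer admits a representation in base $3$ with digit set $\{-1,0,1\}$.

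To establish that representation I argue by strong induction on $k \geq 0$ using the division algorithm. Write $k = 3q + r$ with $r \in \{0,1,2\}$. If $r \in \{0,1\}$, assign the least digit $d_{0} = r$ and recurse on the strictly smaller nonnegative integer $q = \lfloor k/3 \rfloor$; if $r = 2$, assign $d_{0} = -1$ and recurse on $q + 1 = (k+1)/3$, which is again a nonnegative integer and, since $r = 2$ forces $k \geq 2$, is strictly smaller than $k$. Prepending $d_{0}$ to the digits produced by the recursion (shifted up one place) yields $k = \sum_{j} d_{j} 3^{j}$ with $d_{j} \in \{-1,0,1\}$; the base case $k = 0$ gives the empty expansion. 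Applying the splitting of the previous paragraph then finishes the proof. (Alternatively, one may simply cite the standard existence of balanced ternary expansions.)

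The argument has no serious obstacle; the only point requiring a little care is verifying that the digit-by-digit recursion terminates, that is, controlling the quotient in the $r = 2$ case so that the inductive quantity strictly decreases while remaining nonnegative. Once the reformulation in terms of digits in $\{-1,0,1\}$ is in place, this is routine.
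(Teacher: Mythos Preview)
Your proof is correct. Both your argument and the paper's rest on the same underlying fact—that every integer has a balanced-ternary expansion with digits in $\{-1,0,1\}$—but the packaging differs. The paper argues by invariance: it shows $\mathcal{D}(\Gamma_3)$ is closed under the three maps $\varphi_0(x)=3x$, $\varphi_1(x)=3x+1$, $\varphi_2(x)=3x-1$ (using that $\Gamma_3$ itself is closed under $x\mapsto 3x$ and $x\mapsto 3x+1$), notes that $\{-1,0,1\}\subset\mathcal{D}(\Gamma_3)$, and then asserts the conclusion. That last step tacitly uses that the orbit of $\{-1,0,1\}$ under $\varphi_0,\varphi_1,\varphi_2$ is all of $\mathbb{Z}$, which is exactly balanced ternary read from the most significant digit down. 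Your version instead builds the balanced-ternary expansion explicitly from the least significant digit up via the division algorithm, and then splits the digits into positive and negative parts to exhibit the difference. Your write-up is thus a little more self-contained; the paper's is shorter but leaves the key arithmetic fact implicit.
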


\begin{proof}
We prove that $\mathcal{D}(\Gamma_{3})$ is invariant under the iterated functions $\varphi_{0}(x) = 3x$, $\varphi_{1}(x) = 3x + 1$, and $\varphi_{2}(x) = 3x -1$.  Indeed, suppose that $n \in \mathcal{D}(\Gamma_{3})$, then $n = \eta_{1} - \eta_{2}$ for $\eta_{k} \in \Gamma_{3}$.  Since $\Gamma_{3}$ is invariant under $\varphi_{0}$ and $\varphi_{1}$, we have
\begin{align*}
 \varphi_{0}(n) &= \varphi_{0}(\eta_{1}) - \varphi_{0}(\eta_{2}) \in \mathcal{D}(\Gamma_{3}) \\
 \varphi_{1}(n) &= \varphi_{1}(\eta_{1}) - \varphi_{0}(\eta_{2}) \in \mathcal{D}(\Gamma_{3}) \\
 \varphi_{2}(n) &= \varphi_{0}(\eta_{1}) - \varphi_{1}(\eta_{2}) \in \mathcal{D}(\Gamma_{3}).
\end{align*}
Clearly $\{-1, 0,1\} \subset \mathcal{D}(\Gamma_{3})$, so since it is invariant under $\varphi_{0}, \varphi_{1}$, $\varphi_{2}$, our claim is established.
\end{proof}

\begin{remark}
A consequence of Corollary \ref{C:K3} is that $\Gamma_{3}$ is \emph{not} a spectrum of any measure.  Likewise, any $\Gamma \subsetneq \mathbb{Z}$ whose difference set is $\mathbb{Z}$ is not a spectrum of any measure.
\end{remark}

\begin{corollary} \label{C:K4}
If $\Gamma \subset \mathbb{N}_{0}$ is such that $\mathcal{D}(\Gamma) \neq \mathbb{Z}$, then there exist absolutely continuous measures in $\mathcal{M}(K_{\Gamma})$.  In particular, $\mathcal{M}(K_{4})$ contains many absolutely continuous measures.
\end{corollary}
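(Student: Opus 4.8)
The plan is to reduce the statement entirely to Corollary \ref{C:FC}, which says that for a probability measure $\mu$ we have $\mu \in \mathcal{M}(K_\Gamma)$ if and only if $\hat{\mu}(n) = 0$ for every $n \in \mathcal{D}(\Gamma) \setminus \{0\}$. Thus the task becomes purely one of prescribing the Fourier coefficients of an absolutely continuous density: I want to produce nonnegative densities of total mass one whose nonzero Fourier frequencies all lie \emph{outside} the difference set $\mathcal{D}(\Gamma)$. Note that $\lambda$ is already in $\mathcal{M}(K_\Gamma)$ by Corollary \ref{C:lambda}, so the real content is to build a rich supply of absolutely continuous measures other than Lebesgue measure.

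First I would use that $\mathcal{D}(\Gamma)$ is symmetric about the origin and contains $0$: the hypothesis $\mathcal{D}(\Gamma) \neq \mathbb{Z}$ then produces a positive integer $k_0$ with $\pm k_0 \notin \mathcal{D}(\Gamma)$. For each $\epsilon \in (0,1]$ I set $f_\epsilon(x) = 1 + \epsilon \cos(2\pi k_0 x)$ and $d\mu_\epsilon = f_\epsilon\, d\lambda$. Since $f_\epsilon \geq 1 - \epsilon \geq 0$, the measure $\mu_\epsilon$ is nonnegative and absolutely continuous, and since the cosine term integrates to zero it is a probability measure. Writing $f_\epsilon = 1 + \frac{\epsilon}{2}(e^{2\pi i k_0 x} + e^{-2\pi i k_0 x})$ and integrating against $e^{-2\pi i n x}$ gives $\hat{\mu}_\epsilon(n) = \delta_{n,0} + \frac{\epsilon}{2}(\delta_{n,k_0} + \delta_{n,-k_0})$, so $\hat{\mu}_\epsilon(n) = 0$ for every $n \notin \{0, \pm k_0\}$, and in particular for every $n \in \mathcal{D}(\Gamma)\setminus\{0\}$ by the choice of $k_0$. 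By Corollary \ref{C:FC}, $\mu_\epsilon \in \mathcal{M}(K_\Gamma)$; the $\mu_\epsilon$ are pairwise distinct and none equals $\lambda$, giving the promised continuum of absolutely continuous measures.

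It then remains to check the hypothesis $\mathcal{D}(\Gamma_4) \neq \mathbb{Z}$ for the kernel $K_4$, which I would do by a residue argument modulo $4$: every element of $\Gamma_4 = \{\sum_j l_j 4^j : l_j \in \{0,1\}\}$ is congruent to its lowest digit $l_0 \in \{0,1\}$ modulo $4$, so each element is $\equiv 0$ or $1 \pmod 4$; hence any difference is $\equiv 0, 1,$ or $3 \pmod 4$ and never $\equiv 2 \pmod 4$, so $2 \notin \mathcal{D}(\Gamma_4)$. Applying the construction with $k_0 = 2$ yields the family of absolutely continuous measures in $\mathcal{M}(K_4)$. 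I do not expect any serious obstacle here: the only points needing care are the nonnegativity of the density (secured by $\epsilon \leq 1$) and the avoidance condition on frequencies (secured by the choice of $k_0$), while the mildly nontrivial ingredient is the elementary residue computation showing $2 \notin \mathcal{D}(\Gamma_4)$. As an alternative to that computation, one could instead invoke that $\mu_4$ is spectral with spectrum $\Gamma_4$, so orthogonality of the exponentials forces $\hat{\mu}_4$ to vanish on $\mathcal{D}(\Gamma_4)\setminus\{0\}$; since $\mu_4 \neq \lambda$, some nonzero frequency survives and again $\mathcal{D}(\Gamma_4) \neq \mathbb{Z}$.
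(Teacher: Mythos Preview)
Your proof is correct and follows essentially the same strategy as the paper: reduce to Corollary~\ref{C:FC} and build a nonnegative density of the form $1$ plus a trigonometric perturbation supported on frequencies outside $\mathcal{D}(\Gamma)$. The paper's construction is slightly more general, taking $\dfrac{d\mu}{d\lambda}(\theta) = 1 + \sum_{n \notin \mathcal{D}(\Gamma)} b_n \cos(2\pi n\theta)$ with $\sum |b_n| < 1$, whereas you use a single frequency $k_0$ and a scalar parameter $\epsilon$; your version is a special case but already yields a continuum of examples. For the claim $\mathcal{D}(\Gamma_4) \neq \mathbb{Z}$, the paper proves the stronger fact that the set $\Gamma_4' = \{\sum l_j 4^j : l_j \in \{0,2\}\}$ meets $\mathcal{D}(\Gamma_4)$ only at $0$, via uniqueness of base-$4$ expansions; your residue argument modulo $4$ showing $2 \notin \mathcal{D}(\Gamma_4)$ is a cleaner route to the weaker statement actually needed.
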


\begin{proof}
We define $\mu$ by its Radon-Nikodym derivative: choose
\[ \dfrac{ d \mu }{ d \lambda }(\theta) = 1 + \sum_{n \notin \mathcal{D}(\Gamma)} b_{n} \cos(2 \pi n \theta) \]
subject to the constraint that $\sum_{n \notin \mathcal{D}(\Gamma)} | b_{n} | < 1$.  It follows that $\mu$ is a probability measure such that 
\[ \widehat{\dfrac{ d \mu }{ d \lambda } }(n) = 0 \]
for $n \in \mathcal{D}(\Gamma)\setminus\{0\}$, and so satisfies Corollary \ref{C:FC}.

Now, we claim that $\mathcal{D}(\Gamma_{4}) \neq \mathbb{Z}$.  Indeed, if we define
\[ \Gamma_{4}^{\prime} = \left\{ \sum_{j = 0}^{N} l_j 4^{j} | l_{j} \in \{0, 2\} \right\}, \]
then we claim that $\Gamma_{4}^{\prime} \cap \mathcal{D}(\Gamma_{4}) = \{ 0 \}$.  To establish this, suppose we have
\[ \sum_{j=0}^{N_{1}} l_{j} 4^j = \sum_{j=0}^{N_{2}} p_{j} 4^{j} - \sum_{j=0}^{N_{3}} q_{j} 4^j \]
with $l_{j} \in \{0,2\}$ and $p_{j},q_{j} \in \{ 0, 1 \}$.  We may assume $N_{1} = N_{2} = N_{3}$ by padding with $0$'s if necessary.  Thus, we have
\[ \sum_{j=0}^{N} p_{j} 4^{j} = \sum_{j=0}^{N} l_{j} 4^j + \sum_{j=0}^{N} q_{j} 4^j = \sum_{j=0}^{N} (l_{j} + q_{j}) 4^{j}, \]
where $l_{j} + q_{j} \in \{0, 1, 2, 3\}$.  Since the base $4$ expansion is unique, we must have that $p_{j} = l_{j} + q_{j}$ for all $j$, which can only occur when $l_{j} = 0$ for all $j$.
\end{proof}

\section{Absolutely Continuous Measures}

We proceed now to prove the Meta-Theorem in the case that the Grammian matrix $M$ is a bounded operator on $\ell^{2}(\mathbb{N}_{0})$.  As mentioned previously, this occurs when the measure $\mu$ is absolutely continuous with bounded Radon-Nikodym derivative.

\begin{theorem} \label{Th:AC}
Suppose $C$ is a bounded, positive, self-adjoint operator on $\ell^{2}(\mathbb{N})$, $\mu << \lambda$, and $\dfrac{d \mu}{d \lambda} \in L^{\infty}(\mathbb{T})$.  Then $\mu \in \mathcal{M}(K_{C})$ if and only if $C = C M C$, where $M$ is the Grammian matrix of $\{ e_{n} \}_{n=0}^{\infty} \subset L^2(\mu)$, i.e. $M_{mn} = \hat{\mu}(n-m)$.
\end{theorem}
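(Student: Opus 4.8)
The plan is to reduce the reproducing identity to the operator equation $C=CMC$ by rewriting the boundary integral as an $\ell^2$-inner product against $CMC$, paralleling the diagonal computation of Theorem \ref{Th:diag} but now carrying the full $\ell^2$ vectors $C\vec{w}$ rather than single diagonal entries. The hypothesis $\tfrac{d\mu}{d\lambda}\in L^{\infty}$ enters at exactly the point where boundedness is needed to license the rearrangements, since it is equivalent to $M$ being a bounded operator on $\ell^2(\mathbb{N}_{0})$.

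First I would establish existence of the $\mu$-boundaries and identify them. Because $\tfrac{d\mu}{d\lambda}\in L^{\infty}(\lambda)$, for every $f\in L^2(\lambda)$ we have $\|f\|_{\mu}^2=\int|f|^2\,\tfrac{d\mu}{d\lambda}\,d\lambda\leq\|\tfrac{d\mu}{d\lambda}\|_{\infty}\|f\|_{\lambda}^2$, so the inclusion $L^2(\lambda)\hookrightarrow L^2(\mu)$ is a bounded embedding. By Lemma \ref{L:KCbounded}, for each fixed $w$ the function $K_{C}(w,\cdot)$ lies in $H^2(\mathbb{D})$; using $c_{nm}=\overline{c_{mn}}$ together with Equation (\ref{Eq:Ceq}), its Taylor coefficients are $a_n=\overline{(C\vec{w})_n}$, so its Lebesgue boundary function is $K_{C}^{\ast}(w,x)=\sum_n\overline{(C\vec{w})_n}e^{2\pi inx}$, with convergence in $L^2(\lambda)$ (and $\{a_n\}\in\ell^2$ since $C\vec{w}\in\ell^2$). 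The bounded embedding then promotes the $L^2(\lambda)$-convergence $K_{C}(w,re^{2\pi i\,\cdot})\to K_{C}^{\ast}(w,\cdot)$ to $L^2(\mu)$-convergence, so the $\mu$-boundary $K_{C}^{\star}(w,\cdot)$ exists, equals $K_{C}^{\ast}(w,\cdot)$, and is represented by the same series converging in $L^2(\mu)$.

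Next I would compute the right-hand side of Equation (\ref{Eq:reproduce}). Since $\int_{0}^{1}e^{2\pi ipx}e^{-2\pi iqx}\,d\mu=\widehat{\mu}(q-p)=M_{pq}$, and since both boundary series converge in $L^2(\mu)$, continuity of the $L^2(\mu)$-inner product on partial sums yields
\[ \int_{0}^{1}K_{C}^{\star}(w,x)\overline{K_{C}^{\star}(z,x)}\,d\mu(x)=\sum_{p,q}\overline{(C\vec{w})_p}\,(C\vec{z})_q\,M_{pq}=\langle MC\vec{z},\,C\vec{w}\rangle_{\ell^2}=\langle CMC\vec{z},\,\vec{w}\rangle_{\ell^2}, \]
where the last two equalities use $C\vec{z},C\vec{w}\in\ell^2$, boundedness of $M$ (so that $MC\vec{z}\in\ell^2$ and the double sum collapses to a genuine inner product), and self-adjointness of $C$. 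Comparing with $K_{C}(w,z)=\langle C\vec{z},\vec{w}\rangle_{\ell^2}$ from Equation (\ref{Eq:Ceq}), the reproducing identity holds for all $w,z\in\mathbb{D}$ if and only if $\langle(C-CMC)\vec{z},\vec{w}\rangle_{\ell^2}=0$ for all $w,z\in\mathbb{D}$. The vectors $\{\vec{w}:w\in\mathbb{D}\}$ have dense linear span in $\ell^2(\mathbb{N}_{0})$, since $\langle\eta,\vec{w}\rangle_{\ell^2}=\sum_n\eta_n\overline{w}^n$ vanishing for all $w\in\mathbb{D}$ forces the analytic function $\sum_n\eta_n\overline{w}^n$ to be identically zero, hence $\eta=0$; as $C-CMC$ is bounded, the vanishing of this sesquilinear form is equivalent to $C-CMC=0$. (Alternatively, one may expand $\overline{(C\vec{w})_p}=\sum_a c_{ap}\overline{w}^a$ and $(C\vec{z})_q=\sum_b c_{qb}z^b$ and match two-variable Taylor coefficients, recovering $(CMC)_{ab}$ exactly as in Theorem \ref{Th:diag}.)

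I expect the main obstacle to be the rigorous justification of the collapse of the double boundary series into $\langle CMC\vec{z},\vec{w}\rangle_{\ell^2}$: one must verify that inner products of the partial sums of the two $L^2(\mu)$-convergent expansions converge to $\langle MC\vec{z},C\vec{w}\rangle_{\ell^2}$, and that $CMC$ is an honest bounded-operator product with entries $(CMC)_{ab}=\sum_{p,q}c_{ap}M_{pq}c_{qb}$ given by absolutely convergent sums. This is precisely where boundedness of $M$—equivalently, by the boundedness criterion recalled in the introduction, the assumption $\mu\ll\lambda$ with $\tfrac{d\mu}{d\lambda}\in L^{\infty}$—is indispensable, in contrast with the diagonal case of Theorem \ref{Th:diag}, where the corresponding sums are finite and no boundedness of $M$ is required.
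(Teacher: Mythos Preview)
Your proposal is correct and follows essentially the same strategy as the paper's proof: identify the $\mu$-boundary of $K_{C}(w,\cdot)$ as the $\ell^2$-series $\sum_n \overline{(C\vec{w})_n}\,e^{2\pi i n x}$, expand the boundary integral, and reduce the reproducing identity to $C=CMC$. The packaging differs slightly---you obtain the $\mu$-boundary via the bounded embedding $L^2(\lambda)\hookrightarrow L^2(\mu)$ applied to the Hardy-space radial limit, whereas the paper argues that $\{e_n\}$ is Bessel in $L^2(\mu)$ and invokes Abel summability; you phrase the computation as $\langle CMC\vec{z},\vec{w}\rangle_{\ell^2}$ and conclude via density of $\{\vec{w}:w\in\mathbb{D}\}$, whereas the paper writes out the double series, justifies the rearrangements by explicit Cauchy--Schwarz estimates, and matches Taylor coefficients---but these are cosmetic variations on the same argument.
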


\begin{proof}
Since $\dfrac{ d \mu }{ d \lambda } \in L^{\infty}(\mathbb{T})$, the sequence $\{ e_{n} \}_{n=0}^{\infty} \subset L^2(\mu)$ is a Bessel sequence.  Thus, $M$ is a bounded operator on $\ell^{2}(\mathbb{N})$, and the matrix product $CMC$ is defined.  Moreover, for every $ w \in \mathbb{D} $, we have that since 
\[ \left( C \vec{w} \right)_{n} \in \ell^2(\mathbb{N}) \]
the series
\[ \sum_{n=0}^{\infty} \left( \sum_{m=0}^{\infty} c_{mn} \overline{w}^{m} \right) e^{2 \pi i n t} \]
converges in $L^2(\mu)$.  Thus, we have that for every $w \in \mathbb{D}$, the $L^2(\mu)$ boundary is given by
\[ K_{C}^{\star}(w, t ) = \sum_{n = 0}^{\infty} \left( \sum_{m=0}^{\infty} c_{mn} \overline{w}^{m} \right) e^{2 \pi i n t} \]
by Abel summation.  We calculate
\begin{align}
\int_{0}^{1} K_{C}^{\star}(w, t) \overline{K_{C}^{\star}(z,t)} d \mu(t) &= \sum_{n=0}^{\infty} \left( \sum_{m=0}^{\infty} c_{mn} \overline{w}^{m} \right) \int_{0}^{1} e^{2 \pi i n t} \overline{K_{C}^{\star}(z,t)} d \mu(t) \notag \\
&= \sum_{n=0}^{\infty} \left( \sum_{m=0}^{\infty} c_{mn} \overline{w}^{m} \right) \sum_{k=0}^{\infty} \left( \sum_{l=0}^{\infty} \overline{c}_{lk} z^{l} \right) \int_{0}^{1} e^{2 \pi i n t} e^{- 2 \pi i k t} d \mu(t) \notag \\
&=\sum_{n=0}^{\infty} \left( \sum_{m=0}^{\infty} c_{mn} \overline{w}^{m} \right)  \sum_{k=0}^{\infty} \left( \sum_{l=0}^{\infty} c_{k l} z^{l} \right) M_{n k}. \label{Eq:CM1}
\end{align}
We have by the Cauchy-Schwarz inequality:
\begin{align*}
\sum_{l} \sum_{k} | c_{k l} M_{n k} z^{l} | &\leq \sum_{l} | z^{l} | \sqrt{ \sum_{k} | c_{k l} |^2 } \sqrt{ \sum_{k} | M_{n k} |^2 } \\
&\leq \|C \| \|M \| \sum_{l} | z^{l} | \\
&< \infty. 
\end{align*}
Therefore, 
\begin{equation} \label{Eq:CM2} 
(\ref{Eq:CM1}) = \sum_{n=0}^{\infty} \left( \sum_{m=0}^{\infty} c_{mn} \overline{w}^{m} \right)  \sum_{l=0}^{\infty} \left( MC \right)_{n l} z^{l}.
\end{equation}
Again by the Cauchy-Schwarz inequality:
\begin{align*}
\sum_{m} \sum_{n} \sum_{l} | c_{mn} \left( MC \right)_{n l} z^{l} \overline{w}^n | &\leq \sum_{m} \sum_{l} | z^{l} \overline{w}^n | \sqrt{ \sum_{n} | c_{mn}|^2} \sqrt{ \sum_{n} | \left( MC \right)_{n l}} |^2 \\
&\leq \| C \| \| MC \| \sum_{m} \sum_{l} | z^{l} \overline{w}^n | \\
&< \infty.
\end{align*}
Whence
\begin{equation} \label{Eq:CMC}
(\ref{Eq:CM2}) = \sum_{l=0}^{\infty} \sum_{m=0}^{\infty} (CMC)_{m l} \overline{w}^{m} z^{l}.
\end{equation}

Consequently,  Equation (\ref{Eq:reproduce}) if and only if
\begin{equation} \label{Eq:C=CMC}
\sum_{n = 0}^{\infty} \sum_{m = 0}^{\infty} c_{mn} \overline{w}^{m} z^{n} = \sum_{n=0}^{\infty} \sum_{m=0}^{\infty} (CMC)_{m n} \overline{w}^{m} z^{n}.
\end{equation}
Equation (\ref{Eq:C=CMC}) holds if and only if $C=CMC$ by the uniqueness of Taylor series coefficients.
\end{proof}

\section{Preservation of Norms of Subspaces of $L^2(\lambda)$}

The proofs of Theorems \ref{Th:diag} and \ref{Th:AC} suggest that the property that $K_{C}$ reproduces itself with respect to some $\mu$ on the boundary is related to the following question: given a closed subspace $V \subset L^2(\lambda)$, for which measures $\nu$ does the following norm preservation identity hold for all $f \in V$:
\[ \int |f|^2 d \lambda = \int |f|^2 d \nu? \]
Of course, this is ill-defined, because for $f \in L^2(\lambda)$, the question of whether $f \in L^2(\nu)$ and subsequently norm equality may depend on the representative.  However, this ambiguity can be made precise using the boundary behavior of kernels as in Theorems \ref{Th:norm1} and \ref{Th:norm2}. 

\begin{definition}
Suppose $V \subset L^{2}_{+}(\lambda)$ is a closed subspace and let $\widetilde{V} \subset H^{2}(\mathbb{D})$ be the space consisting of all functions whose $L^2(\lambda)$ boundaries are in $V$.  We say the measure $\mu$ preserves the norm of $V$ if for every $f \in V$, with $f(x) = \sum_{n = 0}^{\infty} a_{n} e^{2 \pi i n x}$, the corresponding function $F(z) = \sum_{n=0}^{\infty} a_{n} z^n$ has a $L^{2}(\mu)$ boundary $F^{\star}$ and
\[ \| f \|_{\lambda} = \| F^{\star} \|_{\mu}. \]
\end{definition}

\begin{lemma} \label{L:norm1}
Suppose $\Gamma \subset \mathbb{N}_{0}$ and $\mu$ satisfies Equation \ref{Eq:Fourier}, then for every $f$ in the subspace generated by $K_{\Gamma}$, $f$ possesses a $L^2(\mu)$-boundary, and the norm of the $L^2(\mu)$-boundary agrees with the $H^2(\mathbb{D})$ norm of $f$.
\end{lemma}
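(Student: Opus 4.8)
The plan is to reduce everything to a single observation: the hypothesis (Equation \ref{Eq:Fourier}) is exactly what forces the exponentials $\{e^{2\pi i \gamma x}\}_{\gamma \in \Gamma}$ to be \emph{orthonormal} in $L^2(\mu)$. First I would pin down the subspace generated by $K_{\Gamma}$. Since $K_{\Gamma}(w,\cdot)(z) = \sum_{\gamma \in \Gamma} \overline{w}^{\gamma} z^{\gamma}$, every generating function lies in the closed span $M := \overline{\mathrm{span}}^{H^2}\{ z^{\gamma} : \gamma \in \Gamma \}$, and conversely $M$ has reproducing kernel $\sum_{\gamma} \overline{w}^{\gamma} z^{\gamma} = K_{\Gamma}$; by Moore--Aronszajn the RKHS of $K_{\Gamma}$ is precisely $M$. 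The diagonal coefficient matrix of $K_{\Gamma}$ is the orthogonal projection onto $M$, so Theorem \ref{Th:Lebesgue} gives that the $K_{\Gamma}$-norm equals the $H^2$ norm; hence a typical element is $f = \sum_{\gamma \in \Gamma} a_{\gamma} z^{\gamma}$ with $\sum_{\gamma} |a_{\gamma}|^2 = \| f \|_{H^2}^2 < \infty$.

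Next I would establish the orthonormality. For $\gamma, \gamma' \in \Gamma$,
\[ \langle e^{2\pi i \gamma \cdot}, e^{2\pi i \gamma' \cdot} \rangle_{\mu} = \int_{0}^{1} e^{2\pi i (\gamma - \gamma') x}\, d\mu(x) = \hat{\mu}(\gamma' - \gamma). \]
When $\gamma = \gamma'$ this is $\hat{\mu}(0) = \| \mu \| = 1$; when $\gamma \neq \gamma'$ the integer $\gamma' - \gamma$ lies in $\mathcal{D}(\Gamma) \setminus \{0\}$, so Equation \ref{Eq:Fourier} forces the inner product to vanish. Thus $\{ e^{2\pi i \gamma \cdot} \}_{\gamma \in \Gamma}$ is an orthonormal system in $L^2(\mu)$. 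Consequently, for the $\ell^2$ coefficient sequence $(a_{\gamma})$ of any $f \in M$, the series $f^{\star}(x) := \sum_{\gamma \in \Gamma} a_{\gamma} e^{2\pi i \gamma x}$ converges in $L^2(\mu)$ and satisfies $\| f^{\star} \|_{\mu}^2 = \sum_{\gamma} |a_{\gamma}|^2 = \| f \|_{H^2}^2$, which is the claimed norm identity.

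It remains to confirm that this $f^{\star}$ is genuinely the $L^2(\mu)$-boundary of $f$, i.e. the radial limit in the $L^2(\mu)$ sense. For $0 < r < 1$ the function $f(r e^{2\pi i x}) = \sum_{\gamma} a_{\gamma} r^{\gamma} e^{2\pi i \gamma x}$ converges absolutely and uniformly in $x$, hence in $L^2(\mu)$, with coefficient sequence $(a_{\gamma} r^{\gamma}) \in \ell^2$. Using orthonormality once more,
\[ \| f^{\star} - f(r e^{2\pi i \cdot}) \|_{\mu}^2 = \sum_{\gamma \in \Gamma} |a_{\gamma}|^2 (1 - r^{\gamma})^2. \]
Each summand tends to $0$ as $r \to 1^-$ and is dominated by $|a_{\gamma}|^2 \in \ell^1$, so dominated convergence for series gives $\| f^{\star} - f(r e^{2\pi i \cdot}) \|_{\mu} \to 0$; thus $f$ possesses the $L^2(\mu)$-boundary $f^{\star}$.

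The main point---essentially the only non-formal step---is the orthonormality computation, since once the exponentials are orthonormal in $L^2(\mu)$ every convergence and norm claim follows from routine Hilbert-space and dominated-convergence arguments. The two things I would be careful about are that $\mu$ should be a probability measure (so that $\hat{\mu}(0) = 1$ and the norms match \emph{exactly} rather than up to the factor $\| \mu \|$), and that $\mathcal{D}(\Gamma)$ is symmetric, so that $\gamma' - \gamma$ is covered by Equation \ref{Eq:Fourier} regardless of sign.
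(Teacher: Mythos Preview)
Your proof is correct and follows essentially the same approach as the paper: both hinge on the observation that Equation~\ref{Eq:Fourier} makes the cross terms $\hat{\mu}(\gamma'-\gamma)$ vanish, which you phrase explicitly as orthonormality of $\{e^{2\pi i\gamma\cdot}\}_{\gamma\in\Gamma}$ in $L^2(\mu)$. The only organizational difference is that the paper treats finite sums first and then passes to the general case by Abel summability and limits, whereas you handle the general $\ell^2$ case directly via orthonormality and dominated convergence for the radial limit; your caveat about $\mu$ being a probability measure is well placed and is also implicit in the paper's computation.
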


\begin{proof}
Consider $\Gamma_{1} \subset \Gamma$ of finite cardinality, and $f(z) = \sum_{\gamma \in \Gamma_{1}} a_{\gamma} z^{\gamma}$.  Let $f^{\star}( e^{2 \pi i \theta} ) = \sum_{\gamma \in \Gamma_{1}} a_{\gamma} e^{2 \pi i \gamma \theta}$.  We claim that $f^{\star}$ is the $L^2(\mu)$-boundary of $f$ and that
\[ \| f^{\star} \|_{\mu} = \| f \|_{H^2}. \]
Indeed, we have that the $L^2(\mu)$-boundary of the function $z^{\gamma}$ is $e^{2 \pi i \gamma \theta}$ by uniform convergence, and thus by linearity $f^{\star}$ is the $L^2(\mu)$-boundary of $f$.  Moreover, 
\begin{align}
\| f^{\star} \|_{\mu}^{2} &= \int_{0}^{1} \left( \sum_{\gamma \in \Gamma_{1}} a_{\gamma} e^{2 \pi i \gamma \theta} \right) \left( \sum_{\gamma^{\prime} \in \Gamma_{1}} \overline{a_{\gamma^{\prime}}} e^{-2 \pi i \gamma^{\prime} \theta} \right) d \mu \notag \\
&= \sum_{\gamma \in \Gamma_{1}} \sum_{\gamma^{\prime} \in \Gamma_{1}} a_{\gamma} \overline{a_{\gamma^{\prime}}} \int_{0}^{1} e^{2 \pi i (\gamma - \gamma^{\prime})} d \mu \notag \\
&= \sum_{\gamma \in \Gamma_{1}} | a_{\gamma} |^2 \notag \\
&= \| f \|_{H^2}^{2}. \label{Eq:equalnorms}
\end{align}

Now, for $f(z) = \sum_{\gamma \in \Gamma} a_{\gamma} z^{\gamma}$, the series
\[ f^{\star}(e^{2 \pi i \theta}) = \sum_{\gamma \in \Gamma} a_{\gamma} e^{2 \pi i \gamma \theta} \]
converges in $L^2(\mu)$, and $f^{\star}$ is the $L^2(\mu)$-boundary for $f$ by Abel summability.  The equality of norms follows from taking limits in Equation (\ref{Eq:equalnorms}).
\end{proof}

\begin{theorem} \label{Th:norm1}
Let $\Gamma \subset \mathbb{N}_{0}$ and let $V$ be the closed span of $\{ e^{2 \pi i \gamma \theta} \}_{\gamma \in \Gamma}$ in $L^2(\lambda)$.  The measure $\mu$ preserves the norm of $V$ if and only if $C = CMC$, where $C$ is the diagonal matrix $c_{nn} = 1$ if $n \in \Gamma$ and $0$ otherwise, and $M = (\hat{\mu}(n-m))_{mn}$.
\end{theorem}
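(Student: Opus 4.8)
The plan is to reduce both implications to the single Fourier condition \eqref{Eq:Fourier}, i.e. $\hat{\mu}(n) = 0$ for all $n \in \mathcal{D}(\Gamma) \setminus \{0\}$. Indeed, the proof of Corollary \ref{C:FC} already shows that for the diagonal $C$ attached to $\Gamma$ one has $C = CMC$ if and only if \eqref{Eq:Fourier} holds (once $\hat{\mu}(0) = \|\mu\| = 1$ is known, which is forced by the diagonal entries of $C = CMC$). So it suffices to prove that $\mu$ preserves the norm of $V$ if and only if $\hat{\mu}$ vanishes on $\mathcal{D}(\Gamma) \setminus \{0\}$, after which the theorem is just a translation through Corollary \ref{C:FC}.

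For the forward implication, I would exploit the natural isometry between $V$ and the reproducing kernel Hilbert space generated by $K_{\Gamma}$. A typical element of $V$ has the form $f(x) = \sum_{\gamma \in \Gamma} a_{\gamma} e^{2\pi i \gamma x}$ with $\sum_{\gamma} |a_{\gamma}|^2 < \infty$, the coefficients off $\Gamma$ vanishing because $V$ is the closed span of the corresponding exponentials; since $\{ e^{2\pi i \gamma x} \}$ is orthonormal in $L^2(\lambda)$ we have $\|f\|_{\lambda}^2 = \sum_{\gamma} |a_{\gamma}|^2$. The associated analytic function $F(z) = \sum_{\gamma \in \Gamma} a_{\gamma} z^{\gamma}$ then lies in the space generated by $K_{\Gamma}$ with $\|F\|_{H^2}^2 = \sum_{\gamma} |a_{\gamma}|^2 = \|f\|_{\lambda}^2$. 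Assuming \eqref{Eq:Fourier}, Lemma \ref{L:norm1} guarantees that $F$ has an $L^2(\mu)$-boundary $F^{\star}$ with $\|F^{\star}\|_{\mu} = \|F\|_{H^2}$, and chaining these equalities yields $\|f\|_{\lambda} = \|F^{\star}\|_{\mu}$, which is exactly norm preservation.

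For the converse I would test the norm-preservation identity on finitely supported elements of $V$. Applying it first to a single exponential $f = e^{2\pi i \gamma x}$, whose analytic extension is $z^{\gamma}$ with boundary $e^{2\pi i \gamma x}$ by uniform convergence, forces $\|\mu\| = \int 1\,d\mu = \|f\|_{\lambda}^2 = 1$, so $\mu$ is a probability measure. Next, for distinct $\gamma, \gamma' \in \Gamma$ and an arbitrary scalar $c$, I would apply norm preservation to $f = e^{2\pi i \gamma x} + c\, e^{2\pi i \gamma' x}$, whose analytic counterpart $z^{\gamma} + c z^{\gamma'}$ has boundary $e^{2\pi i \gamma x} + c\, e^{2\pi i \gamma' x}$. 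Expanding and using $\hat{\mu}(\gamma' - \gamma) = \overline{\hat{\mu}(\gamma - \gamma')}$ gives
\[ \|F^{\star}\|_{\mu}^2 = 1 + |c|^2 + 2\,\mathrm{Re}\!\left( c\, \hat{\mu}(\gamma - \gamma') \right), \]
whereas $\|f\|_{\lambda}^2 = 1 + |c|^2$. Requiring equality for both $c = 1$ and $c = i$ forces $\hat{\mu}(\gamma - \gamma') = 0$, and ranging over all distinct pairs in $\Gamma$ yields \eqref{Eq:Fourier}, hence $C = CMC$.

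There is no deep obstacle here, since the statement is essentially a repackaging of Lemma \ref{L:norm1} together with the computation inside Corollary \ref{C:FC}. The one point that requires care is the bookkeeping of the correspondence $f \leftrightarrow F$: I must confirm that membership in $V$ pins the Fourier support exactly to $\Gamma$, that the $L^2(\lambda)$ and $H^2$ norms agree coefficientwise under this correspondence, and that the boundary function produced by Lemma \ref{L:norm1} is literally the object $F^{\star}$ named in the definition of norm preservation. Once that identification is nailed down, and the normalization $\|\mu\| = 1$ is extracted (it is implied by either hypothesis), both directions close immediately.
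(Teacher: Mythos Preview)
Your proposal is correct. For the direction $C=CMC \Rightarrow$ norm preservation, your argument is essentially the paper's: both pass from $C=CMC$ to the Fourier vanishing condition via Corollary~\ref{C:FC} and then invoke Lemma~\ref{L:norm1} to produce the $L^2(\mu)$-boundaries and the norm equality for every $f$ in the $K_\Gamma$-space (the paper detours briefly through kernel combinations $\sum_j b_j K_C^\star(w_j,\cdot)$ before appealing to Lemma~\ref{L:norm1}, but this is inessential). For the other direction, norm preservation $\Rightarrow C=CMC$, you take a genuinely more elementary route: you test the norm identity on single exponentials and on two-term combinations $e^{2\pi i\gamma x}+c\,e^{2\pi i\gamma' x}$ to extract $\hat{\mu}(0)=1$ and $\hat{\mu}(\gamma-\gamma')=0$ directly, whereas the paper argues that norm preservation plus polarization forces $\mu\in\mathcal{M}(K_C)$ and then invokes Theorem~\ref{Th:diag}. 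Your approach is self-contained and avoids quoting the full diagonal meta-theorem; the paper's approach situates the result inside the $\mathcal{M}(K_C)$ framework, which is conceptually cleaner if one has already proved Theorem~\ref{Th:diag}. One small point: your paragraph labeled ``forward implication'' actually proves the converse of the biconditional you set up (Fourier vanishing $\Rightarrow$ norm preservation), and vice versa; the content is fine but the labels are swapped.
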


\begin{proof}
We have that $K_{C} (=K_{\Gamma})$ is the reproducing kernel for the space $\widetilde{V}$, and thus, if $\mu$ preserves the norm of $V$, we have by the polarization identity that $\mu \in \mathcal{M}(K_{C})$.  By Theorem \ref{Th:diag} we must have $C = CMC$.

Conversely, if $C = CMC$, then $\mu \in \mathcal{M}(K_{C})$, and thus $\mu$ preserves the norms of finite linear combinations $\sum_{j=1}^{N} b_{j} K_{C}^{\star}(w_{j}, \cdot)$:
\begin{align*}
\| \sum_{j=1}^{N} b_{j} K_{C}^{\star}(w_{j}, \cdot) \|_{\mu}^{2} &= \sum_{j = 1}^{N} \sum_{k = 1}^{N} b_{j} \overline{b_{k}} \int_{0}^{1} K_{C}^{\star}(w_{j}, \cdot ) \overline{K_{C}^{\star}(w_{k}, \cdot )} d \mu \\
&= \sum_{j = 1}^{N} \sum_{k = 1}^{N} b_{j} \overline{b_{k}} K_{C}(w_{j}, w_{k}) \\
&= \| \sum_{j=1}^{N} b_{j} K_{C}(w_{j}, \cdot) \|_{H^{2}}^{2} \\
&= \| \sum_{j=1}^{N} b_{j} K_{C}^{\star}(w_{j}, \cdot) \|_{\lambda}^{2}.
\end{align*}

We see by the proof of Corollary \ref{C:FC} that if $C = CMC$, then $\mu$ satisfies Equation (\ref{Eq:Fourier}).  By Lemma \ref{L:norm1}, every element of the space spanned by $K_{C}$ possesses an $L^2(\mu)$ boundary, and by density, $\mu$ then preserves the norms of all elements of $V$. 
\end{proof}

\begin{lemma} \label{L:norm2}
Suppose $C$ is a projection on $\ell^{2}(\mathbb{N}_{0})$ and $\mu << \lambda$ is such that $\dfrac{d \mu}{d \lambda} \in L^{\infty}(\lambda)$.  Let $N = ( \hat{\mu}(m-n))_{mn}$.  If $C = CNC$, then for every sequence $(a_{n})_{n}$ in the range of $C$  
\[ \| \sum a_{n} e_{n} \|_{\mu} = \| \sum a_{n} e_{n} \|_{\lambda}. \]
\end{lemma}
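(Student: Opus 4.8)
The plan is to reduce the claimed norm equality to the operator identity $C = CNC$ by writing both sides as quadratic forms on $\ell^{2}(\mathbb{N}_{0})$. First I would record the two structural facts that make everything work. Since $\mu \ll \lambda$ with $\tfrac{d\mu}{d\lambda} \in L^{\infty}(\lambda)$, every $f \in L^{2}(\lambda)$ satisfies $\|f\|_{\mu}^{2} = \int |f|^{2}\,\tfrac{d\mu}{d\lambda}\,d\lambda \leq \|\tfrac{d\mu}{d\lambda}\|_{\infty}\|f\|_{\lambda}^{2}$, so $L^{2}(\lambda)$ embeds continuously into $L^{2}(\mu)$. Moreover, by the lemma characterizing boundedness of $M = (\hat{\mu}(n-m))_{mn}$, and since $N$ is the transpose of $M$ (equivalently its entrywise conjugate, because $\hat{\mu}(m-n) = \overline{\hat{\mu}(n-m)}$), the matrix $N$ is a bounded operator on $\ell^{2}(\mathbb{N}_{0})$ of the same norm. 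Finally, as $C$ is a projection its range lies in $\ell^{2}(\mathbb{N}_{0})$, so any $(a_{n})_{n}$ in the range of $C$ is an $\ell^{2}$ sequence with $Ca = a$, and the partial sums $\sum_{n \leq K} a_{n} e_{n}$ converge in $L^{2}(\lambda)$, hence in $L^{2}(\mu)$.

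Next I would compute each norm as a form in $a = (a_{n})_{n}$. Because $\{e_{n}\}$ is orthonormal in $L^{2}(\lambda)$, we get at once $\|\sum a_{n} e_{n}\|_{\lambda}^{2} = \sum |a_{n}|^{2} = \langle a, a \rangle_{\ell^{2}}$. For the $\mu$-norm I would expand the square and use the paper's convention $\int_{0}^{1} e^{2\pi i (n-m)x}\,d\mu = \hat{\mu}(m-n) = N_{mn}$, which gives $\|\sum a_{n} e_{n}\|_{\mu}^{2} = \sum_{m,n} a_{n}\overline{a_{m}} N_{mn} = \langle Na, a \rangle_{\ell^{2}}$. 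The interchange of integration and summation is the one point requiring care, and I would justify it by passing through finite partial sums: for each $K$ one has the exact finite identity $\|\sum_{n \leq K} a_{n} e_{n}\|_{\mu}^{2} = \langle N P_{K} a, P_{K} a \rangle_{\ell^{2}}$, where $P_{K}$ denotes truncation; letting $K \to \infty$ and using the continuous embedding on the left together with boundedness of $N$ and $a \in \ell^{2}$ on the right yields $\|\sum a_{n} e_{n}\|_{\mu}^{2} = \langle Na, a \rangle_{\ell^{2}}$.

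Finally I would run the algebraic identity. Writing $a = Ca$ and using $C = C^{\ast} = C^{2}$ together with the hypothesis $C = CNC$,
\[ \langle Na, a \rangle = \langle NCa, Ca \rangle = \langle CNCa, a \rangle = \langle Ca, a \rangle = \langle a, a \rangle. \]
Combining with the two computations above gives $\|\sum a_{n} e_{n}\|_{\mu}^{2} = \langle Na, a \rangle = \langle a, a \rangle = \|\sum a_{n} e_{n}\|_{\lambda}^{2}$, which is the asserted equality.

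The only genuine obstacle is the convergence and interchange step in the second paragraph; the rest is bookkeeping. I would also flag explicitly why $N$, rather than $M$, is the correct matrix here: the Gram form of $\{e_{n}\}$ in $L^{2}(\mu)$ is $\int e^{2\pi i (n-m)x}\,d\mu = N_{mn}$, so the $\mu$-norm is governed by $N$, and this is precisely why the hypothesis is posed as $C = CNC$.
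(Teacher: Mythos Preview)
Your proof is correct and follows essentially the same route as the paper: express $\|\sum a_n e_n\|_\mu^2$ as the quadratic form $\langle N a, a\rangle_{\ell^2}$, then use $a = Ca$ and the hypothesis $C = CNC$ to collapse this to $\langle a, a\rangle_{\ell^2} = \|\sum a_n e_n\|_\lambda^2$. The only difference is that you supply more detail---the continuous embedding $L^2(\lambda)\hookrightarrow L^2(\mu)$, the boundedness of $N$, and the truncation argument justifying the interchange---where the paper simply asserts that ``our hypotheses yield'' convergence and then computes.
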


Note that $N = M^{T}$ in our previous notation.

\begin{proof}
Our hypotheses yield that the series $\sum_{n} a_{n} e^{2 \pi i n \theta}$ converges in $L^2(\mu)$.  We have
\begin{align*}
\| \sum a_{n} e^{2 \pi i n \theta} \|^{2}_{\mu} &= \sum_{n,m} a_{n} \overline{a_{m}} \int_{0}^{1} e^{2 \pi i (n-m) \theta} d \mu(\theta) \\
&= \langle N (a_{n})_{n}, (a_{n})_{n} \rangle \\
&= \langle C N C (a_{n})_{n}, (a_{n})_{n} \rangle \\
&= \langle C (a_{n})_{n}, (a_{n})_{n} \rangle \\
&= \| \sum a_{n} e^{2 \pi i n \theta} \|^{2}_{\lambda}.
\end{align*}

\end{proof}

\begin{lemma} \label{L:CT}
Suppose $V$ is a subspace of $L^2_{+}(\lambda)$.  Let $C$ be the projection on $\ell^{2}(\mathbb{N}_{0})$ such that $f = \sum_{n} a_{n} e_{n} \in V$ if and only if $(a_{n})_{n}$ is in the range of $C$.  Then the reproducing kernel of $\widetilde{V}$ is $K_{C^{T}}$.
\end{lemma}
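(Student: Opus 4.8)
The plan is to route everything through the coefficient isomorphism, realize the reproducing kernel of $\widetilde{V}$ as the compression of the Szeg\H{o} kernel to $\widetilde{V}$, and then match Taylor coefficients against those of $K_{C^{T}}$.

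First I would fix the identifications. The Taylor-coefficient map $F = \sum_{n} a_{n} z^{n} \mapsto (a_{n})_{n}$ is a unitary $H^{2}(\mathbb{D}) \to \ell^{2}(\mathbb{N}_{0})$, and since the $L^{2}(\lambda)$-boundary of such an $F$ is $\sum_{n} a_{n} e_{n}$ with the same coefficients, this map compatibly identifies $H^{2}(\mathbb{D})$, $L^{2}_{+}(\lambda) = \overline{\mathrm{span}}\{e_{n}\}_{n \geq 0}$, and $\ell^{2}(\mathbb{N}_{0})$. Under these identifications $F \in \widetilde{V}$ holds if and only if $\sum_{n} a_{n} e_{n} \in V$, i.e.\ if and only if $(a_{n})_{n} \in \mathrm{ran}\, C$; hence $\widetilde{V}$ corresponds to $\mathrm{ran}\, C$ and is a closed subspace of $H^{2}(\mathbb{D})$, so it is itself a reproducing kernel Hilbert space. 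Here ``projection'' means orthogonal projection, so $C$ is self-adjoint (giving $\overline{c_{mn}} = c_{nm}$, exactly the symmetry making $K_{C^{T}}$ a legitimate kernel), and the orthogonal projection $P_{\widetilde{V}}$ onto $\widetilde{V}$ is carried to $C$ by the coefficient map.

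Next I would use the standard fact (already invoked in the proof of Theorem \ref{Th:Lebesgue}, part $(3 \Leftrightarrow 5)$) that the reproducing kernel of a closed subspace $M$ of an RKHS with kernel $K$ is $K^{M}(w, \cdot) = P_{M} K(w, \cdot)$. Applying this with the Szeg\H{o} kernel, whose reproducing element $k_{w}(z) = (1 - \overline{w} z)^{-1} = \sum_{n} \overline{w}^{\,n} z^{n}$ has coefficient sequence $\overline{\vec{w}} = (\overline{w}^{\,n})_{n}$, the reproducing element of $\widetilde{V}$ at $w$ is $P_{\widetilde{V}} k_{w}$, whose coefficient sequence is $C \overline{\vec{w}}$. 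Therefore
\[
K^{\widetilde{V}}(w,z) = (P_{\widetilde{V}} k_{w})(z) = \sum_{n} (C \overline{\vec{w}})_{n}\, z^{n} = \sum_{n} \sum_{m} c_{nm} \overline{w}^{\,m} z^{n},
\]
and comparing with $K_{C^{T}}(w,z) = \sum_{n,m} (C^{T})_{mn} \overline{w}^{\,m} z^{n} = \sum_{n,m} c_{nm} \overline{w}^{\,m} z^{n}$ shows the two coincide, so $K^{\widetilde{V}} = K_{C^{T}}$.

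The computations are routine; the only point demanding care is the bookkeeping of the conjugation and the transpose, which is where the $C^{T}$ in the statement originates. The Szeg\H{o} element $k_{w}$ carries the conjugated coefficients $\overline{w}^{\,n}$, while $K_{C}(w, \cdot)$ has coefficient vector $C^{T} \overline{\vec{w}}$ rather than $C \overline{\vec{w}}$ (because in $K_{C}(w,z) = \sum c_{mn} \overline{w}^{\,m} z^{n}$ the holomorphic variable sits on the second index). For this reason I would display the comparison at the level of individual coefficients, as above, rather than in operator notation where the transpose is easy to misplace.
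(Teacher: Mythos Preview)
Your proof is correct and amounts to the same computation as the paper's. The paper verifies directly that $K_{C^{T}}(w,\cdot)\in\widetilde{V}$ (because its coefficient sequence is $C\overline{\vec{w}}$) and that $\langle f, K_{C^{T}}(w,\cdot)\rangle_{H^{2}} = f(w)$ for $f\in\widetilde{V}$ (using $C(a_{n})_{n}=(a_{n})_{n}$), whereas you package the same calculation by quoting the ``projected kernel'' fact $K^{\widetilde{V}}(w,\cdot)=P_{\widetilde{V}}k_{w}$ and reading off the coefficients; the indexing check that produces the transpose is identical in both.
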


\begin{proof}
First note that for $w \in \mathbb{D}$,
\[ K_{C^T} (w, z) = \sum_{n} \sum_{m} (C^{T})_{mn} \overline{w}^{m} z^{n} = \sum_{n} \left( C \vec{\overline{w}} \right)_{n} z^{n}  \]
is such that the coefficients are in the range of $C$.  Thus, $K_{C}(w, \cdot) \in \widetilde{V}$.  Now, for $f(z) = \sum_{n} a_{n} z^{n} \in \widetilde{V}$, we have that
\begin{align*}
\langle f , K_{C^{T}} \rangle_{H^{2}} &= \langle (a_{n})_{n} , C \vec{ \overline{w}} \rangle_{\ell^2} \\
&= \langle (a_{n})_{n} , \vec{ \overline{w}} \rangle_{\ell^2} \\
&= \sum_{n} a_{n} w^{n} \\
&= f(w),
\end{align*}
and thus $K_{C^{T}}$ is the kernel as claimed.
\end{proof}

\begin{theorem} \label{Th:norm2}
Suppose $V$ is a subspace of $L^{2}_{+}(\lambda)$, and let $\mu < < \lambda$ with $\dfrac{d \mu}{d \lambda} \in L^{\infty}(\lambda)$.  Then $\mu$ preserves the norm of $V$ if and only if $C^{T} = C^{T}MC^{T}$, where $C$ is the projection on $\ell^{2}(\mathbb{N}_{0})$ with the property that $K_{C^{T}}$ is the reproducing kernel of $\widetilde{V} \subset H^{2}(\mathbb{D})$ and $M = (\hat{\mu}(n-m))_{mn}$.
\end{theorem}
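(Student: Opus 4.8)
The plan is to route the entire argument through the reproducing kernel $K_{C^{T}}$ of $\widetilde{V}$, reducing the norm-preservation statement to membership in $\mathcal{M}(K_{C^{T}})$ and then invoking Theorem \ref{Th:AC}. The first step is to observe that $C^{T}$ is again a bounded, positive, self-adjoint projection: since $C$ is self-adjoint, $(C^{T})_{mn} = c_{nm} = \overline{c_{mn}}$, so $C^{T} = \overline{C}$ (the entrywise conjugate), and one checks directly that $(\overline{C})^{*} = \overline{C}$ and $(\overline{C})^{2} = \overline{C^{2}} = \overline{C}$. Hence Theorem \ref{Th:AC} applies to $K_{C^{T}}$ and gives $\mu \in \mathcal{M}(K_{C^{T}})$ if and only if $C^{T} = C^{T} M C^{T}$. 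Since $K_{C^{T}}$ is the reproducing kernel of $\widetilde{V}$ by Lemma \ref{L:CT}, it suffices to prove that $\mu$ preserves the norm of $V$ if and only if $\mu \in \mathcal{M}(K_{C^{T}})$.

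For the forward direction I would test preservation on finite linear combinations $g = \sum_{j} b_{j} K_{C^{T}}(w_{j}, \cdot)$, which lie in $\widetilde{V}$ and whose $L^{2}(\lambda)$ boundaries therefore lie in $V$. Because $\frac{d\mu}{d\lambda} \in L^{\infty}(\lambda)$, convergence in $L^{2}(\lambda)$ forces convergence in $L^{2}(\mu)$, so the $L^{2}(\lambda)$ boundary of $g$ is simultaneously its $L^{2}(\mu)$ boundary $g^{\star}$. Applying the definition of norm preservation to the boundary of $g$ yields $\| g \|_{H^{2}} = \| g^{\star} \|_{\mu}$, and polarizing in the scalars $b_{j}$ upgrades this to $K_{C^{T}}(w_{j}, w_{k}) = \int_{0}^{1} K^{\star}_{C^{T}}(w_{j}, x) \overline{K^{\star}_{C^{T}}(w_{k}, x)}\, d\mu$, which is exactly Equation (\ref{Eq:reproduce}); hence $\mu \in \mathcal{M}(K_{C^{T}})$.

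For the converse, suppose $C^{T} = C^{T} M C^{T}$, equivalently $\mu \in \mathcal{M}(K_{C^{T}})$. Taking transposes and writing $N = M^{T} = (\hat{\mu}(m-n))_{mn}$ gives $C = C N C$. By the defining property of $C$ recorded in Lemma \ref{L:CT}, the range of $C$ is precisely the set of coefficient sequences of elements of $V$, so Lemma \ref{L:norm2} applies and produces $\| \sum a_{n} e_{n} \|_{\mu} = \| \sum a_{n} e_{n} \|_{\lambda}$ for every such sequence; this is exactly the assertion that $\mu$ preserves the norm of $V$.

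I expect the main subtlety to be the bookkeeping of transpose conventions, namely matching the identity $C^{T} = C^{T} M C^{T}$ supplied by Theorem \ref{Th:AC} with the hypothesis $C = C N C$ of Lemma \ref{L:norm2} through the relation $N = M^{T}$, together with the verification in the forward direction that the kernel functions, viewed as elements of $\widetilde{V}$, possess $L^{2}(\mu)$ boundaries that coincide with their $L^{2}(\lambda)$ boundaries, so that the polarization step is justified.
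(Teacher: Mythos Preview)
Your proposal is correct and follows essentially the same route as the paper: both directions are reduced to $\mu \in \mathcal{M}(K_{C^{T}})$ via Theorem \ref{Th:AC}, with the forward direction handled by polarization on kernel combinations and the converse by transposing to $C = CNC$ and invoking Lemma \ref{L:norm2}. Your converse is in fact slightly more direct than the paper's, which applies Lemma \ref{L:norm2} only to finite spans of $K_{C^{T}}^{\star}(w_{j},\cdot)$ and then passes to the closure, whereas you observe that Lemma \ref{L:norm2} already covers every coefficient sequence in the range of $C$ and hence all of $V$ at once.
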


\begin{proof}
For $w_{1},w_{2} \in \mathbb{D}$, $K_{C^{T}}(w_{j}, z) = \sum_{n} ( C \vec{\overline{w}}_{j} )_{n} z^{n}$.  By our assumptions, the $L^2(\mu)$ boundary of $K_{C^{T}}(w_{j}, \cdot)$ is $\sum_{n} ( C \vec{\overline{w}}_{j} )_{n} e^{2 \pi i n \theta}$.

Suppose that $\mu$ preserves the norm of $V$.  We have by the Polarization Identity:
\begin{align*}
\int_{0}^{1} K_{C^{T}}^{\star}(w_{1}, \theta) \overline{K_{C^{T}}^{\star}(w_{2}, \theta)} d \mu(\theta) &= \left\langle K_{C^{T}}^{\star} (w_{1}, \cdot) , K_{C^{T}}^{\star}(w_{2}, \cdot) \right\rangle_{\mu} \\
&= \left\langle \sum_{n} ( C \vec{\overline{w}}_{1} )_{n} e^{2 \pi i n \theta} , \sum_{n} ( C \vec{\overline{w}}_{2} )_{n} e^{2 \pi i n \theta} \right\rangle_{\lambda} \\
&= \langle K_{C^{T}} (w_{1}, \cdot) , K_{C}(w_{2}, \cdot) \rangle_{H^{2}} \\
&= K_{C^{T}}(w_{1}, w_{2}).
\end{align*}
Therefore, $\mu \in \mathcal{M}(K_{C^{T}})$, and hence by Theorem \ref{Th:AC}, $C^{T} = C^{T}MC^{T}$.

Conversely, if $C^{T} = C^{T}MC^{T}$, then $C = CNC$.  Therefore, for every finite linear combination $\sum_{j=1}^{N} \xi_{j} K^{\star}_{C^{T}} (w_{j} , \cdot) \in V$, we have by Lemma \ref{L:norm2} that
\[ \| \sum_{j=1}^{N} \xi_{j} K^{\star}_{C^{T}} (w_{j} , \cdot) \|_{\mu} = \| \sum_{j=1}^{N} \xi_{j} K^{\star}_{C^{T}} (w_{j} , \cdot) \|_{\lambda}. \]
By density, $\mu$ preserves the norm of $V$.
\end{proof}

\bibliographystyle{amsplain}

\providecommand{\bysame}{\leavevmode\hbox to3em{\hrulefill}\thinspace}
\providecommand{\MR}{\relax\ifhmode\unskip\space\fi MR }
\providecommand{\MRhref}[2]{%
  \href{http://www.ams.org/mathscinet-getitem?mr=#1}{#2}
}
\providecommand{\href}[2]{#2}

\end{document}